\newenvironment{auskommentiert}{}{}
\newtheorem{theo}{Theorem}[section]
\newtheorem{prop}[theo]{Proposition}
\newtheorem{cor}[theo]{Corollary}
\newtheorem{lem}[theo]{Lemma}
\newenvironment{remark}{{\noindent \bf Remark \refstepcounter{theo}\thetheo} }{ \vspace{1ex}}
\newcommand*\colvec[1]{
        \global\colveccount#1
        \begin{pmatrix}
        \colvecnext
}
\def\colvecnext#1{
        #1
        \global\advance\colveccount-1
        \ifnum\colveccount>0
                \\
                \expandafter\colvecnext
        \else
                \end{pmatrix}
        \fi
}
\newcommand{\ul}{\underline}
\newcommand{\ol}{\overline}
\newcommand{\nats}{I\hspace{-0,3em}N }
\newcommand{\Pmw}[1]{{\mathfrak P}(#1)}
\newcommand{\Pm}[1]{{\mathfrak P}_0(#1)}
\newcommand{\gx}{{\mathfrak X}}
\newcommand{\ga}{{\mathfrak A}}
\newcommand{\gb}{{\mathfrak B}}
\newcommand{\gm}{{\mathfrak M}}
\newcommand{\gc}{{\mathfrak C}}
\newcommand{\gd}{{\mathfrak D}}
\newcommand{\go}{{\mathfrak O}}
\newcommand{\gs}{{\mathfrak S}}
\newcommand{\Uf}[1]{{\mathfrak F}_0(#1{})}
\newcommand{\Fi}[1]{{\mathfrak F}(#1{})}
\newcommand{\epf}[1]{\overset{\bullet}{#1{}}}
\newcommand{\ca}{{\cal A}}
\newcommand{\cb}{{\cal B}}
\newcommand{\cf}{{\cal F}}
\newcommand{\ch}{{\cal H}}
\newcommand{\cu}{{\cal U}}
\newcommand{\ufi}{\underline{\varphi}}
\newcommand{\ups}{\underline{\psi}}
\renewcommand{\phi}{\varphi}
\newcommand{\eps}{\varepsilon}
\renewcommand{\labelenumii}{(\alph{enumii})}
\renewcommand{\labelenumi}{(\arabic{enumi}) }
\newcommand{\bdm}{\begin{displaymath}}
\newcommand{\edm}{\end{displaymath}}
\newcommand{\vietoris}[2]{\langle #1\rangle_{_{#2}}}
\begin{document}

\def\timestring{\begingroup
   \count0 = \time
   \divide\count0 by 60
   \count2 = \count0   % The hour, from zero to 23.
   \count4 = \time
   \multiply\count0 by 60
   \advance\count4 by -\count0   % The minute, from zero to 59.
   \ifnum\count4<10
      \toks1 = {0}%
   \else
      \toks1 = {}%
   \fi
   \ifnum\count2<10
      \toks0 = {0}%
   \else
      \toks0 = {}%
     % \advance\count2 by -12
   \fi
   \ifnum\count2=0
      \toks0 = {00}
   \fi
  \the\toks0 \number\count2:\the\toks1 \number\count4 \thinspace
\endgroup}%
\newcommand{\timestamp}{\today, \timestring}

\hyphenation{na-tu-ral in-ner-to-po-lo-gi-cal Rau-mes O-ber-ul-tra-fil-ter po-wer-fil-ter
po-wer-fil-ter-spa-ces theo-rem mul-ti-fil-ter Mul-ti-fil-ter Ei-gen-schaft to-po-lo-gisch-es
gleich-ste-tig com-pac-ti-fi-ca-tion}

\renewcommand{\labelenumii}{(\alph{enumii})}
\renewcommand{\labelenumi}{(\arabic{enumi}) }

\author{Ren\'e Bartsch}

\title{Hyperspaces in topological Categories}

\maketitle

\abstract{Hyperspaces form a powerful tool in some branches of mathematics: lots of fractal and other geometric objects can be viewed as fixed points of some functions in suitable hyperspaces - as well as interesting classes of formal languages in theoretical computer sciences, for example (to illustrate the wide scope of this concept). Moreover, there are many connections between hyperspaces and function spaces in topology. Thus results from hyperspaces help to get new results in function spaces and vice versa.\\

Unfortunately, there ist no natural hyperspace construction known for general topological categories (in contrast to the situation for function spaces).\\

We will shortly present a rather combinatorial idea for the transfer of structure from a set $X$ to a subset of $\mathfrak{P}(X)$, just to motivate an interesting question in set theory.\\

Then we will propose and discuss a new approach to define hyperstructures, which works in every cartesian closed topological category, and so applies to every topological category, using it's topological universe hull.}

\section{Preliminary Def\/initions and Results}

For a given set $X$ we denote by $\Pmw{X}$ the power set of $X$, by $\Pm{X}$ the power set without the empty set. By $\Fi{X}$ (resp. $\Uf{X}$) we mean the set of all filters (resp. ultrafilters) on $X$; if $\phi$ is a filter on $X$, the term $\Uf{\phi}$ denotes the set of all ultrafilters on $X$, which contain $\phi$. For $x\in X$ we denote by $\epf{x}:=\{A\subseteq X|\; x\in A\}$ the singleton filter on $X$, generated by $\{x\}$. With $\gs(X):=\{\epf{x}|\; x\in X\}$ we mean the family of all singleton filters on $X$. By $\ca(X):=\{f: \Pm{X}\to X|\; \forall A\in \Pm{X}: f(A)\in A\}$ we denote the set of choice functions over $X$.

For a topological space $(X,\tau)$ we denote by $q_{\tau}:=\{(\phi,x)\in\Fi{X}\times X|\; \phi\supseteq \epf{x}\cap\tau\}$ the convergence induced by $\tau$.\\

For families $\ga\subseteq\Pmw{X}$ and any $M\subseteq X$ we set 
\bdm
M^{-_{\ga}}:=\{A\in\ga|\; A\cap M\not = \emptyset\}
\edm
and 
\bdm
M^{+_{\ga}}:=\{A\in\ga|\; A\cap M=\emptyset\}\text{ .}
\edm
Then for a topological space $(X,\tau)$ on $\ga\subseteq \Pm{X}$ the {\em lower Vietoris topology} $\tau_{l,\ga}$ is defined by the subbase $\{O^{-_{\ga}}|\; O\in\tau\}$, whereas the {\em upper Vietoris topology} $\tau_{u,\ga}$ on $\ga$ comes from the subbase $\{(X\setminus O)^{+_{\ga}}|\; O\in \tau\}$. The {\em Vietoris topology} on $\ga$ is $\tau_{V,\ga}:=\tau_{l,\ga}\vee\tau_{u,\ga}$. In most cases $\ga$ is chosen as the family $Cl(X)$ of the closed, or $K(X)$ of the compact subsets of a topological space $(X,\tau)$, or as the entire $\Pm{X}$. Whenever there is no doubt about $\ga$, we will omit it as sub- and superscript.\\

We will need some basic facts about Stone-\v{C}ech-compactification of discrete spaces.\\

A discrete space $(D,\delta)$ clearly is $T_4$ and Hausdorff, so its Stone-\v{C}ech-compactification is homeomorphic to its Wallman extension, consisting in this case just of the set $\Uf{D}$, where the singleton filters are identified with their generating points via $w:D\to \Uf{D}:w(x):=\epf{x}$, endowed with the topology generated from the base consisting of all sets $\Uf{M}$, with $M\subseteq D$ (see \cite{engelking}, p.176ff).\\

\begin{prop}
Let $(D,\delta)$ be a discrete topological space. Then for its Stone-\v{C}ech-compactification $(B,\sigma)$ hold
\begin{enumerate}
\renewcommand{\labelenumi}{(\alph{enumi})}
\renewcommand{\theenumi}{(\alph{enumi})}
\item\label{preprop-1} For all $M\subseteq D$ in $B$ the closure $\ol{M}$ is clopen.
\item\label{preprop-2} $\sigma$ has a base consisting of clopen sets.
\item\label{preprop-3} All clopen sets $C$ in $(B,\sigma)$ are of the form $C=\ol{C\cap D}$.
\end{enumerate}
\end{prop}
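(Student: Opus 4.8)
The plan is to work entirely inside the concrete Wallman model $B=\Uf{D}$, with $D$ embedded via $w\colon D\to\Uf{D}$, $w(x)=\epf{x}$, and $\sigma$ generated by the base of all sets $\Uf{M}$ with $M\subseteq D$, where $\Uf{M}$ is the set of ultrafilters on $D$ having $M$ as a member. I would first record three elementary identities for this base: $\Uf{M}\cap\Uf{N}=\Uf{M\cap N}$ (filters are closed under finite intersection), $\Uf{M}\cup\Uf{N}=\Uf{M\cup N}$ (ultrafilters are prime), and $B\setminus\Uf{M}=\Uf{D\setminus M}$ (an ultrafilter contains exactly one of $M$ and $D\setminus M$). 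The last identity already shows that each $\Uf{M}$ is closed as well as open; since these sets form a base by construction, part (b) is then immediate.

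For part (a) I would prove the sharper equality $\ol{M}=\Uf{M}$ for every $M\subseteq D$, where $M$ is identified with $w(M)=\{\epf{x}\mid x\in M\}$. One inclusion is cheap: $\Uf{M}$ is closed and contains $w(M)$, since $x\in M$ is literally the assertion $M\in\epf{x}$; hence $\ol{M}\subseteq\Uf{M}$. For the reverse inclusion, take $\mathcal{U}\in\Uf{M}$ and an arbitrary basic neighbourhood $\Uf{N}$ of $\mathcal{U}$; then $N\in\mathcal{U}$ and $M\in\mathcal{U}$, so $M\cap N\in\mathcal{U}$ and in particular $M\cap N\neq\emptyset$, and any $x\in M\cap N$ yields $\epf{x}\in w(M)\cap\Uf{N}$. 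Thus every neighbourhood of $\mathcal{U}$ meets $w(M)$, so $\mathcal{U}\in\ol{M}$. Since $\Uf{M}$ is clopen, $\ol{M}$ is clopen.

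For part (c), let $C$ be clopen in $B$. As $C$ is open, $C=\bigcup_{i\in I}\Uf{M_i}$ for suitable $M_i\subseteq D$; as $C$ is closed in the compact space $B$ it is compact, so finitely many $\Uf{M_{i_1}},\dots,\Uf{M_{i_n}}$ cover $C$, and by the union identity $C=\Uf{M}$ with $M:=M_{i_1}\cup\dots\cup M_{i_n}$. Under the identification, $C\cap D=\{x\in D\mid M\in\epf{x}\}=M$, and part (a) gives $\ol{C\cap D}=\ol{M}=\Uf{M}=C$.

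The argument is essentially routine once the three Boolean identities are in hand, so I do not expect a genuine obstacle; the only step that uses more than ultrafilter bookkeeping is the reduction in part (c) from an arbitrary union of basic clopen sets to a finite one, which relies on compactness of $B$ (legitimate because $B$, being a compactification, is compact) --- that is the point I would state carefully. The one recurring hazard is keeping the embedding $w$ explicit, so that identities such as ``$C\cap D=M$'' are read correctly after the identification of $D$ with $w(D)\subseteq B$.
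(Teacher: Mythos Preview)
Your proof is correct and follows essentially the same route as the paper: both work in the Wallman model $B=\Uf{D}$, establish $\ol{w(M)}=\Uf{M}$ for part (a), derive (b) from the complement identity $B\setminus\Uf{M}=\Uf{D\setminus M}$, and prove (c) by reducing a clopen $C$ to a finite union of basic sets via compactness and then invoking the union identity. The only cosmetic differences are that you state the three Boolean identities explicitly up front and argue the inclusion $\Uf{M}\subseteq\ol{w(M)}$ directly rather than by contradiction.
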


\begin{proof}
We use the homeomorphy of $(B,\sigma)$ to the Wallman extension.\\

\ref{preprop-1} We have $\ol{w(M)}=\Uf{M}$: from $\Uf{M}=\Uf{D}\setminus \Uf{D\setminus M}$ we conclude, that $\Uf{M}$ is closed and of course it contains $w(M)$. So, $\ol{w(M)}\subseteq\Uf{M}$ follows. If there would be a filter $\phi\in \Uf{M}$ which belongs not to $\ol{w(M)}$, then there would exist a base set $\Uf{S}$, $S\subseteq D$, of $\sigma$ s.t. $\phi\in \Uf{S}$ and $\Uf{S}\cap w(M)=\emptyset$. But this implies $M\cap S=\emptyset$, and thus $\Uf{S}\cap\Uf{M}=\emptyset$ - in contradiction to $\phi\in \Uf{S}\cap\Uf{M}$. So, we have indeed $\ol{w(M)}=\Uf{M}$, which is also open, because it belongs to our defining base of $\sigma$.\\

\ref{preprop-2} Follows immediately from \ref{preprop-1}.\\

\ref{preprop-3} Let $C\subseteq B$ be clopen. Then for all $c\in C$ there exists a basic open set $\Uf{M_c}$ with $M_c\subseteq D$, s.t. $c\in \Uf{M_c}\subseteq C$, because $C$ is open. From closedness of $C$ automatically follows compactness, because $(B,\sigma)$ is compact, thus there are finitely many 
$M_{c_1},...,M_{c_n}$ with $C=\bigcup_{i=1}^n\Uf{M_{c_i}}$. Now, for such finite union we have generally $\bigcup_{i=1}^n\Uf{M_{c_i}}=\Uf{\bigcup_{i=1}^n M_{c_i}}$ and it is clear, that $w(\bigcup_{i=1}^n M_{c_i})= C\cap w(D)$ holds.
\end{proof}

\section{Choice Functions}

\begin{lem}
Let $(X,\tau)$ be a topological space, $\widehat{\phi}$ an ultrafilter on $\Pm{X}$ and let $P:=\{p\in X|\; \exists f\in \ca(X): f(\widehat{\phi})\overset{\tau}{\rightarrow}p\}$. Then $\widehat{\phi}$ converges to the $\tau$-closure $\overline{P}\in \Pmw{X}$ w.r.t. $\tau_l$.
\end{lem}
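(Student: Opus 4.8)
The plan is to unwind the definition of convergence in the lower Vietoris topology and then reduce everything to the single defining property of a choice function. Recall that $\tau_l$ has the subbase $\{O^-\mid O\in\tau\}$ with $O^-=\{A\in\Pm{X}\mid A\cap O\neq\emptyset\}$. Since finite intersections of subbasic open sets containing a point form a neighbourhood base at that point, a filter on $\Pm{X}$ converges to a set $Q$ with respect to $\tau_l$ precisely when $O^-\in\widehat{\phi}$ holds for every $O\in\tau$ with $Q\in O^-$, i.e.\ with $Q\cap O\neq\emptyset$. Applying this to $Q=\overline{P}$ and using that $O$ is open, the condition $\overline{P}\cap O\neq\emptyset$ is equivalent to $P\cap O\neq\emptyset$; so it suffices to prove $O^-\in\widehat{\phi}$ for every $O\in\tau$ that meets $P$.

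First I would fix such an $O$ and pick a point $p\in P\cap O$. By the definition of $P$ there is a choice function $f\in\ca(X)$ with $f(\widehat{\phi})\overset{\tau}{\rightarrow}p$, hence $f(\widehat{\phi})\supseteq\epf{p}\cap\tau$; in particular the open neighbourhood $O$ of $p$ lies in the image ultrafilter $f(\widehat{\phi})$, which by definition means $f^{-1}(O)\in\widehat{\phi}$. Now the choice-function property $f(A)\in A$ yields the key set-theoretic inclusion: if $f(A)\in O$ then $A\cap O\neq\emptyset$, i.e.\ $f^{-1}(O)\subseteq O^-$. As $\widehat{\phi}$ is a filter containing $f^{-1}(O)$, it contains $O^-$ as well, and this establishes the required convergence.

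It remains to dispose of the degenerate case $P=\emptyset$, where $\overline{P}=\emptyset$: then $\emptyset$ belongs to no subbasic set $O^-$, so its only $\tau_l$-neighbourhood is the whole space and convergence of $\widehat{\phi}$ to $\emptyset$ is automatic. I do not expect a genuine obstacle in this proof; the only points that deserve a moment's care are the reduction to subbasic neighbourhoods, the remark that an open set meets $\overline{P}$ exactly when it meets $P$, and — if one wishes to be scrupulous — checking that this reading of $\tau_l$-convergence is the intended one in the case where $\overline{P}$ is permitted to be the empty set.
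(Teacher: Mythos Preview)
Your argument is correct and follows essentially the same route as the paper: pick $p\in O\cap P$ from $\overline{P}\in O^-$, take $f\in\ca(X)$ with $f(\widehat{\phi})\to p$, and use $f(A)\in A$ to deduce $f^{-1}(O)\subseteq O^-\in\widehat{\phi}$. You are simply more explicit about the reduction to subbasic neighbourhoods and the degenerate case $P=\emptyset$, neither of which the paper addresses; note also that, as your argument shows, the ultrafilter hypothesis on $\widehat{\phi}$ is never actually used.
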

\begin{proof}
Let $O\in\tau$ be given with $\overline{P}\in O^-$, i.e. $\exists x\in O\cap\overline{P}$ and therefore $\exists p\in O\cap P$. So, there exists a choice function $f\in \ca(X)$ s.t. $f(\widehat{\phi})\to p$, implying $\exists \gc\in\widehat{\phi}: f(\gc)\subseteq O$, thus $\forall C\in\gc: C\cap O\neq \emptyset$. So, we have $\gc\subseteq O^-\in \widehat{\phi}$.
\end{proof}

Obviously, every filter converging to a subset $B\subseteq X$ w.r.t. $\tau_l$, converges to every $A\subseteq B$, too.

\begin{prop}
Let $(X,\tau)$ be a locally compact topological space, $P:=\{p\in X|\; \exists f\in \ca(X): f(\widehat{\phi})\overset{\tau}{\rightarrow}p\}$ and let 
$\widehat{\phi}$ be an ultrafilter on $\Pm{X}$ with $\widehat{\phi}\overset{\tau_l}{\rightarrow}A\in\Pmw{X}$. Then $A\subseteq \overline{P}$ holds.
\end{prop}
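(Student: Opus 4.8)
The plan is to prove the contrapositive: if some point $a\in A$ fails to lie in $\overline P$, I will build a choice function $f\in\ca(X)$ and show $f(\widehat\phi)\to a$, contradicting $a\notin P$. Fix $a\in A\setminus\overline P$. Since $X$ is locally compact and $a\notin\overline P$, there is an open neighbourhood $O$ of $a$ whose closure $\overline O$ is compact and disjoint from $\overline P$ (shrink a compact neighbourhood of $a$ to miss the closed set $\overline P$). Because $\widehat\phi\overset{\tau_l}{\to}A$ and $A\in O^-$ (as $a\in A\cap O$), we have $O^-\in\widehat\phi$, i.e. the family $\gc_0:=\{C\in\Pm X\mid C\cap O\neq\emptyset\}$ belongs to $\widehat\phi$.

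Next I would analyze how $\widehat\phi$ distributes mass among the compact sets $\overline O\cap C$. For each $C\in\gc_0$ the set $K_C:=\overline O\cap C$ is a nonempty subset of the fixed compact set $\overline O$. The key step is a compactness argument applied to $\overline O$: the map $C\mapsto K_C$ pushes $\widehat\phi|_{\gc_0}$ to an ultrafilter-like selection of compacta inside $\overline O$, and since $\overline O$ is compact there must be a point $a^*\in\overline O$ such that for every neighbourhood $U$ of $a^*$ the set $\{C\in\gc_0\mid K_C\cap U\neq\emptyset\}=U^-\cap\gc_0$ again lies in $\widehat\phi$ — otherwise the complements $(X\setminus U)^{+}$ would cover $\overline O$ and finitely many would already be in $\widehat\phi$, forcing some $K_C$ to be empty. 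Having produced such an $a^*$, define $f\in\ca(X)$ by $f(C):=$ a chosen point of $C\cap O$ whenever $C\cap O\neq\emptyset$ (for $C\notin\gc_0$ let $f(C)$ be arbitrary in $C$); more carefully, I refine this so that $f(C)$ lands in $C\cap U$ whenever $C\cap U\neq\emptyset$ for the relevant neighbourhoods $U$ of $a^*$, which is possible because the neighbourhood filter of $a^*$ in the compact space $\overline O$ can be handled one basic set at a time along the ultrafilter. Then $f(\widehat\phi)\to a^*$, so $a^*\in P\subseteq\overline P$; but $a^*\in\overline O$ which is disjoint from $\overline P$ — contradiction.

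The honest gap in the sketch above is that a \emph{single} choice function $f$ must simultaneously witness convergence to $a^*$, and $f(C)$ is only one point of $C$, so I cannot freely put $f(C)$ into every neighbourhood of $a^*$ at once. The clean way to fix this is to run the selection argument directly: consider the collection $\mathcal N$ of all open $U\ni a^*$ with $\overline U$ compact; for the argument it suffices to find one $a^*$ with $U^-\in\widehat\phi$ for all such $U$, and then to choose $f$ so that on the $\widehat\phi$-large set where it matters, $f(C)\in\overline O$, and to verify $f(\widehat\phi)\to a^*$ by checking each $U$ separately — for fixed $U$, the set $U^-\cap\gc_0\in\widehat\phi$, and on that set one \emph{can} choose $f(C)\in C\cap U$; the point is that $\widehat\phi$ being an ultrafilter lets us patch these choices, because any two such large sets meet, so there is no obstruction to defining a \emph{single} $f$ that works for a cofinal system of $U$'s — one takes a well-ordering / transfinite recursion over a neighbourhood base of $a^*$, at each stage choosing $f$ on the newly-relevant sets, using that all the relevant sets are $\widehat\phi$-large hence pairwise intersecting. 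I expect this patching step — turning the "for each $U$ separately" selection into one global choice function — to be the main obstacle, and it is exactly where the ultrafilter hypothesis on $\widehat\phi$ (as opposed to a mere filter) is essential, together with local compactness to keep everything inside the fixed compact set $\overline O$.
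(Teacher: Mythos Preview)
Your proposal contains a real gap, and it is precisely the one you flag yourself: producing a \emph{single} choice function $f$ with $f(\widehat\phi)\to a^*$ for a pre-selected point $a^*$ is not something transfinite patching over a neighbourhood base of $a^*$ will give you. For distinct basic neighbourhoods $U_1,U_2$ of $a^*$ a set $C$ may meet both while $C\cap U_1\cap U_2=\emptyset$, so your single value $f(C)$ cannot serve both constraints; the ultrafilter property of $\widehat\phi$ tells you the relevant index sets are pairwise compatible, but it does \emph{not} let you make one choice in $C$ that lies in every small neighbourhood simultaneously. (This obstacle is exactly why the paper's next proposition passes to \emph{filters} of choice functions under a nested-neighbourhood hypothesis.)

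The gap is self-inflicted, however, because you never needed convergence to a \emph{specific} $a^*$. The paper's argument --- and the clean version of yours --- lets the limit point fall out of compactness after the choice function is fixed. Concretely: given $a\in A$ and an open neighbourhood $U\ni a$, local compactness yields $a\in O\subseteq K\subseteq U$ with $O$ open and $K$ compact. From $\widehat\phi\overset{\tau_l}{\to}A$ one gets $O^-\in\widehat\phi$. Pick any $f\in\ca(X)$ with $f(M)\in M\cap O$ for all $M\in O^-$; then $K\supseteq O\supseteq f(O^-)$, so $K\in f(\widehat\phi)$. Since $\widehat\phi$ is an ultrafilter, $f(\widehat\phi)$ is an ultrafilter too, hence converges to some $k\in K\subseteq U$ by compactness of $K$. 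Thus $k\in P\cap U$, so $U$ meets $P$; as $U$ was arbitrary, $a\in\overline P$.

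In your contrapositive framing the same idea reads: with $\overline O$ compact and disjoint from $\overline P$, choose $f(C)\in C\cap O$ on $O^-$; then $f(\widehat\phi)$ is an ultrafilter containing $\overline O$, hence converges to some $a^*\in\overline O$, so $a^*\in P$ --- contradicting $\overline O\cap P=\emptyset$. The choice function that witnesses the \emph{existence} of your $a^*$ is already the one that puts $a^*$ into $P$; there is no second construction to perform.
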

\begin{proof}
For every $a\in A$ and every open neigbourhood $U\in \epf{a}\cap\tau$ there is a compact $K\subseteq X$ and $O\in \tau$ with $a\in O\subseteq K\subseteq U$. Now, $A\in O^-$ and consequently $O^-\in \widehat{\phi}$. There exists a choice function $f\in \ca(X)$ s.t. $\forall M\in O^-: f(M)\in O$, yielding $f(O^-)\subseteq O$. Thus we find $O\subseteq K\in f(\widehat{\phi})$. But $\widehat{\phi}$ is an ultrafilter and so $f(\widehat{\phi})$ is.
Consequently, by the compactness of $K$, $f(\widehat{\phi})$ converges to some $k\in K\subseteq U$ and we have now $k\in U\cap P\neq\emptyset$. This yields $a\in \overline{P}$. 
\end{proof}

So, for locally compact spaces $(X,\tau)$, every ultrafilter on $\Pmw{X}$  converges w.r.t. $\tau_l$ exactly to the subsets of its corresponding $\overline{P}$.\\

In general, it seems not so easy to guarantee that a single choice function yields a convergent image near a point of the limit of a $\tau_l$-convergent ultrafilter. To give us some more latitude, we try now filters on $\ca(X)$ instead.

\begin{prop}
Let $(X,\tau)$ be a nested neighbourhood space, let $\widehat{\phi}$ be an ultrafilter on $\Pm{X}$ with $\widehat{\phi}\overset{\tau_l}{\rightarrow}A\in\Pmw{X}$ and let 

$P:=\{p\in X|\; \exists \cf\in \Fi{\ca(X)}: \cf(\widehat{\phi})\overset{\tau}{\rightarrow}p\}$. 

\noindent Then $A\subseteq P$ holds.
\end{prop}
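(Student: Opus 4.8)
The plan is to prove the inclusion pointwise. Fix $a\in A$ (if $A=\emptyset$ there is nothing to show); I will construct a filter $\cf\in\Fi{\ca(X)}$ with $\cf(\widehat\phi)\overset{\tau}{\rightarrow}a$, bearing in mind that $\cf(\widehat\phi)$ is the filter on $X$ generated by the sets $\{f(C)\mid f\in F,\ C\in\gc\}$ with $F\in\cf$ and $\gc\in\widehat\phi$ (the evaluation image of the product filter $\cf\times\widehat\phi$). Since $(X,\tau)$ is a nested neighbourhood space, $a$ admits a neighbourhood base $\cu_a$ that is linearly ordered by inclusion; replacing each member by its interior we may assume every $U\in\cu_a$ is open (the interiors still form a neighbourhood base linearly ordered by inclusion). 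For each $U\in\cu_a$ we have $a\in A\cap U$, so $A\in U^-$, and because $\widehat\phi\overset{\tau_l}{\rightarrow}A$ and $U^-$ is a subbasic $\tau_l$-neighbourhood of $A$, it follows that $U^-\in\widehat\phi$.

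For $U\in\cu_a$ put
\[
F_U:=\{f\in\ca(X)\mid f(C)\in U\ \text{for all}\ C\in U^-\}\ .
\]
The first step is to show that $\{F_U\mid U\in\cu_a\}$ has the finite intersection property and contains no empty set, hence generates a proper filter $\cf$ on $\ca(X)$. Given $U_1,\dots,U_n\in\cu_a$, define $f$ on $\Pm{X}$ as follows: for $C\in\Pm{X}$, if some $U_k$ meets $C$, let $U_{k_0}$ be the $\subseteq$-least element of the finite nonempty chain $\{U_k\mid 1\le k\le n,\ C\cap U_k\neq\emptyset\}$ and choose $f(C)\in C\cap U_{k_0}$; otherwise choose $f(C)\in C$ arbitrarily. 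Then $f\in\ca(X)$, and whenever $C\in U_k^-$ one has $f(C)\in U_{k_0}\subseteq U_k$, so $f\in\bigcap_{k=1}^{n}F_{U_k}$; the case $n=1$ shows $F_U\neq\emptyset$ for every $U$. Let $\cf$ be the filter generated by $\{F_U\mid U\in\cu_a\}$.

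The second step is to verify $\cf(\widehat\phi)\overset{\tau}{\rightarrow}a$. Let $O\in\tau$ with $a\in O$ and pick $U\in\cu_a$ with $U\subseteq O$. Then $F_U\in\cf$, $U^-\in\widehat\phi$, and by the definition of $F_U$ we get $\{f(C)\mid f\in F_U,\ C\in U^-\}\subseteq U\subseteq O$; since the left-hand side belongs to the generating base of $\cf(\widehat\phi)$, this yields $O\in\cf(\widehat\phi)$. Hence every open neighbourhood of $a$ lies in $\cf(\widehat\phi)$, i.e. $\cf(\widehat\phi)\overset{\tau}{\rightarrow}a$, so $a\in P$; as $a\in A$ was arbitrary, $A\subseteq P$.

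The one delicate point is the finite-intersection step, and it is exactly there (and only there) that the nestedness hypothesis is indispensable: for incomparable neighbourhoods $U,V$ of $a$ there may be a set $C\in\Pm{X}$ with $C\cap U\neq\emptyset\neq C\cap V$ but $C\cap U\cap V=\emptyset$, and then $F_U\cap F_V=\emptyset$, so in a general space the family $\{F_U\}$ need not even be a filter subbase.
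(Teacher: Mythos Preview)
Your proof is correct and follows essentially the same route as the paper: you define the same sets $F_U=\{f\in\ca(X)\mid \forall C\in U^-\colon f(C)\in U\}$, verify the finite intersection property by choosing values in the $\subseteq$-least $U_k$ meeting $C$ (exactly the paper's $U_{k(H)}$), and conclude $U\in\cf(\widehat\phi)$ from $F_U(U^-)\subseteq U$ and $U^-\in\widehat\phi$. The only addition is your closing remark explaining why nestedness is indispensable, which the paper does not include.
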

\begin{proof}
Let $a\in A$ be given. Let $\gb\subseteq\tau\cap\epf{a}$ be a neighbourhood base of $a$ which is totally ordered by inclusion.
For every $U\in\gb$ let $\ca_U:=\{f\in\ca(X)|\; \forall H\in U^-: f(H)\in U\}$. 
For $n\in\nats$ and $U_1,...,U_n\in\gb$ with $U_1\subseteq \cdots \subseteq U_n$ we have $U_1^-\subseteq\cdots \subseteq U_n^-$. Then for every $H\in U_n^-$ there is a unique $k(H):=\min\{i\in\nats|\; H\cap U_i\neq\emptyset\}$. Now, we see $\ca_{U_1,...,U_n}:=\{f\in\ca(X)|\; \forall H\in U_n^-: f(H)\in U_{k(H)}\}\neq\emptyset$ and $\ca_{U_1,...,U_n}\subseteq \bigcap_{i=1}^n\ca_{U_i}$. So, the family $\{\ca_{U}|\; U\in\gb\}$ forms a subbase for a filter $\cf$ on $\ca$ and obviously $\ca_U(U^-)\subseteq U$ holds. From $\widehat{\phi}\overset{\tau_l}{\rightarrow}A\ni a$ follows $\forall U\in\gb: U^-\in\widehat{\phi}$ and so we find $\forall U\in\gb: U\in \cf(\widehat{\phi})$, yielding $\cf(\widehat{\phi})\to a$. 
\end{proof}

\begin{auskommentiert}

\begin{theo}
\label{unif}
Let $(X,\cu)$ be a uniform space, $\tau_{\cu}$ the induced topology on $X$, $\gx$ of compact subsets of $X$ and $\hat{\cu}:=[\{\hat{R}|R\in \cu\}]$ with
$\hat{R}:=\{(A,B)\in \gx\times\gx|R(A)\supseteq B\wedge R^{-1}(B)\supseteq A\}$ the corresponding
Hausdorff--uniformity on $\gx$. If $\ufi\in \Fi{\gx}$, then the following are equivalent:
\begin{enumerate}
\item $\ufi \overset{\tau_{\hat{\cu}}}{\longrightarrow} A\in \gx$,
\item \begin{enumerate} \item $\forall f\in \ca(X),\ups\in
\Uf{\ufi}:\exists a\in A: f(\ups)\overset{\tau_{\cu}}{\longrightarrow} a$ and
\item $\forall a\in A:\exists f\in \ca(X): f(\ufi)\overset{\tau_{\cu}}{\longrightarrow}
a$.
\end{enumerate}
\end{enumerate}
\end{theo}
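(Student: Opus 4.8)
The plan is to prove the two implications separately, exploiting the structure of the Hausdorff uniformity $\hat{\cu}$ together with the results already obtained for the lower Vietoris convergence. Note first that $\tau_{\hat\cu}$-convergence $\ufi\to A$ should decompose into a ``lower'' part and an ``upper'' part: $\hat R(A)$ contains the entourage-neighbourhood of $A$, so convergence in $\tau_{\hat\cu}$ forces both $\ufi\overset{\tau_l}{\to}A$ and $\ufi\overset{\tau_u}{\to}A$ in the appropriate sense on compact sets. I would begin by isolating this observation as the bridge between the uniform picture and the topological/combinatorial picture of the earlier propositions.

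For the direction $(1)\Rightarrow(2)$: assume $\ufi\overset{\tau_{\hat\cu}}{\to}A$. Part (b) is essentially Proposition stating that $A\subseteq P$-type results give, for each $a\in A$, a choice function whose $\ufi$-image converges to $a$; here one uses that membership of $a$ in $A$ forces every $R$-ball around $a$ to meet all members of a tail of $\ufi$ (from $\hat R(A)\supseteq B$), so a choice function picking points inside shrinking balls does the job — this mirrors the construction in the proof of the third Proposition in Section 2, only with the uniform entourages replacing the nested neighbourhood base, and in fact a uniform space need not be nested, so one must instead pick, for each entourage $R$, a single choice function $f_R$ and pass to the filter generated by these, or observe that on compacta a single $f$ suffices by a finite-subcover argument. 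Part (a): given $f\in\ca(X)$ and $\ups\in\Uf{\ufi}$, the ultrafilter $f(\ups)$ on $X$ lies eventually in $R(A)$ for each $R$ (again from $\hat R(A)\supseteq B$), and since $A$ is compact, $\bigcap_R \ol{R(A)}=A$, so the ultrafilter $f(\ups)$ clusters at — hence converges to — some point of $A$.

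For the converse $(2)\Rightarrow(1)$: I must show that for each entourage $R\in\cu$ there is $\gb\in\ufi$ with $(A,B)\in\hat R$ for all $B\in\gb$, i.e. both $R(A)\supseteq B$ and $R^{-1}(B)\supseteq A$ for all such $B$. The inclusion $R^{-1}(B)\supseteq A$, equivalently $A\subseteq R^{-1}(B)$ i.e. every $a\in A$ has an $R$-partner in $B$, comes from (b): cover the compact $A$ by finitely many $R$-balls centred at points $a_1,\dots,a_m$ obtained from finitely many choice functions $f_1,\dots,f_m$ with $f_i(\ufi)\to a_i$; intersecting the corresponding tail members of $\ufi$ gives a $\gb\in\ufi$ all of whose members meet each of these balls, hence meet every $R$-ball about every $a\in A$. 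The inclusion $R(A)\supseteq B$ is the harder, genuinely ultrafilter-theoretic part and is where (a) is needed: if it failed, the sets $\{B\in\gx\mid B\not\subseteq R(A)\}$ would meet every member of $\ufi$, so (since $\ufi$ is only a filter, pass first to an ultrafilter $\ups\in\Uf{\ufi}$ refining this) one gets $\ups$ concentrated on sets having a point outside $R(A)$; a choice function $f$ selecting such an outside point then yields an ultrafilter $f(\ups)$ with no limit in $A$ (as $A\subseteq \inn R(A)$ up to symmetrizing $R$), contradicting (a).

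The main obstacle I anticipate is precisely this last step: turning the ``bad'' family into an honest ultrafilter and extracting a choice function whose image provably fails to converge into $A$ requires care, because $\ufi$ is a filter rather than an ultrafilter, so (a) must be applied to a suitably chosen $\ups\in\Uf{\ufi}$, and one must ensure the selected points stay uniformly bounded away from $A$ — which is where compactness of $A$ (giving a single entourage separating the complement of $R(A)$ from $A$) and the symmetry/triangle properties of the uniformity are used. A secondary technical point is handling the non-nested case in $(1)\Rightarrow(2)(b)$, where unlike the nested-neighbourhood Proposition above one cannot linearly order the entourages; the fix is the finite-subcover trick on the compact set $A$, which replaces the totally ordered base argument.
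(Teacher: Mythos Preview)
Your overall strategy matches the paper's: both directions are argued by playing entourages against choice functions and ultrafilters, with compactness providing the necessary finiteness. There is one genuine structural difference and one soft spot worth flagging.

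For $(2)\Rightarrow(1)$ you split $\hat R(A)$ into its two halves. The upper half ($B\subseteq R(A)$) you handle exactly as the paper does: pass to $\ups\in\Uf{\ufi}$ concentrated on the bad sets, pick points outside $R(A)$ by a choice function, and contradict (2)(a). For the lower half ($A\subseteq R^{-1}(B)$) your direct argument---finite $S$-cover of the compact $A$ by balls about $a_1,\dots,a_m$, then intersect the tails supplied by the $f_i$ from (2)(b)---is correct and in fact shorter than what the paper does. The paper argues this half indirectly and rather differently: on $\gm_2:=\{M:R^{-1}(M)\not\supseteq A\}$ it chooses $\alpha(M)\in A\setminus R^{-1}(M)$, so $\alpha$ is a choice function \emph{into $A$}; compactness of $A$ forces $\alpha(\ups)\to a_0\in A$, and then one checks that for symmetric $S$ with $S\circ S\subseteq R$ no $\beta\in\ca(X)$ can send $\ups$ into $S(a_0)$, contradicting (2)(b). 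Your route is more economical; the paper's keeps the two halves methodologically parallel (both via a bad choice function on an ultrafilter refinement).

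The soft spot is $(1)\Rightarrow(2)(\mathrm b)$. Condition (2)(b) demands a \emph{single} $f\in\ca(X)$ with $f(\ufi)\to a$, so your first alternative---a filter generated by choice functions $f_R$---does not deliver what is required. And ``finite subcover on the compact $A$'' is the wrong compactness: the task is, for each $M$, to select a point of $M$ close to the \emph{fixed} $a$, so covering $A$ buys nothing. What the paper actually uses is compactness of the \emph{members} $M\in\gx$: it sets $\beta(M)$ to be a nearest point of $M$ to $a$ (the proof is written metrically), whence $\beta(\hat U_\varepsilon(A))\subseteq U_\varepsilon(a)$ follows immediately. In the general uniform setting one would replace ``nearest point'' by an adherence point in $M$ of the trace of $\{R(a):R\in\cu\}$ on $M$; either way it is compactness of $M$, not of $A$, that manufactures the single choice function, and this idea is missing from your sketch.
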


\begin{proof}
$(2)\Rightarrow (1)$: Angenommen, es existiert $R\in \cu$
mit $\hat{R}(A)\not\in \ufi$. Dann existiert auch
ein $\ups\in \Phi_0(\ufi)$ mit $\hat{R}(A)\not\in
\ups$, d.h. $\hat{R}(A)^c=\{M\in \gx|R(A)\not\supseteq M \vee R^{-1}(M)\not\supseteq A \}
\in \ups$.
Daraus folgt ${\frak M}_1:=\{ M\in \gx|R(A)\not\supseteq M \} \in \ups$
oder ${\frak M}_2:=\{M\in \gx| R^{-1}(M)\not\supseteq A \}\in
\ups$.\\
Im Falle ${\frak M}_1\in \ups$ haben wir $\forall M\in {\frak M}_1: \exists \gamma(M)\in M:\gamma(M)\not\in
R(A)$, wir finden also ein $\gamma\in \ca$ mit $\gamma({\frak M}_1)\subseteq
R(A)^c$, also $R(A)^c\in \gamma(\ups)$. Daraus folgt $\forall a\in A: R(a)\not\in \gamma(\ups)$
und somit $\forall a\in A:\gamma(\ups)\not\rightarrow a$ im
Widerspruch zu 2(a).\\
Im Falle ${\frak M}_2\in \ups$ finden wir $\forall M\in {\frak M}_2:\exists \alpha(M)\in A: \alpha(M)\not\in
R^{-1}(M)$. Damit bilden wir $\alpha(\ups)\in \Phi_0(A)$ und
finden wegen der Kompaktheit von $A$ ein $a_0\in A$ mit $\alpha(\ups)\rightarrow
a_0$, also f\"ur $S=S^{-1}\in \cu$ mit $S\circ S\subseteq R$ auch $S^{-1}(a_0)=S(a_0)\in
\alpha(\ups)$. Somit gilt $\alpha^{-1}(S^{-1}(a_0))\in \ups$, also
auch ${\frak M}_2\cap \alpha^{-1}(S^{-1}(a_0))\in \ups$. Nun gilt $\forall M\in {\frak M}_2: \forall \beta\in \ca: (\alpha(M),\beta(M)\not\in
R$, ferner $\forall M\in \alpha^{-1}(S^{-1}(a_0)): (\alpha(M),a_0)\in
S$, was f"ur alle $M$ aus ${\frak M}_2\cap \alpha^{-1}(S^{-1}(a_0))$
und alle $\beta\in \ca$ wegen $S\circ S\subseteq R$ sofort $(a_0, \beta(M))\not\in S$
erzwingt. Das bedeutet aber $\forall \beta\in \ca: S(a_0)\not\in
\beta(\ups)$, also $\beta(\ups)\not\rightarrow a_0$, mithin $\beta(\ufi)\not\rightarrow a_0$
im Widerspruch zu 2(b). \\
Die Annahme eines $R\in \cu$ mit $\hat{R}(A)\not\in \ufi$ f\"uhrt
also stets zum Widerspruch, so da\ss\ $\ufi \overset{\tau_{\hat{\cu}}}{\longrightarrow} A$
gelten mu\ss.\\

$(1)\Rightarrow (2)$: Seien $f\in \ca$ und $\ups\in \Phi_0(\ufi)$
gegeben. Aus (1) folgt zun\\"achst
$\ups\overset{h_d}{\rightarrow}A$. Angenommen, es gilt nicht 2(a).
Dann gilt $\forall a\in A:\ul{U}^d(a)\not\subseteq f(\ups)$, d.h. $\forall a\in A:\exists
\eps_a>0: U_{\eps_a}(a)\not\in f(\ups)$, also jeweils $U_{\eps_a}(a)^c\in
f(\ups)$. Nun gilt $V:=\bigcup_{a\in A}U_{\eps_a}(a)\supseteq
A$, so da"s wegen der Kompaktheit von $A$ endlich viele $a_1,...,a_n\in A$
existieren mit $\bigcup_{i=1}^nU_{\eps_{a_i}}(a_i)\supseteq
A$. Dazu ist $V^c=\bigcap_{i=1}^nU_{\eps_{a_i}}(a_i)^c\in
f(\ups)$, also $V\not\in f(\ups)$. Nach Proposition (\ref{eps1})
existiert nun ein $\eps_0>0$ mit $U_{\eps_0}^d(A)\subseteq
V$, also $U_{\eps_0}(A)^c\in f(\ups)$. Demnach ist ${\frak M}:=f^{-1}(U_{\eps_0}(A)^c)\in
\ups$, d.h.
\bdm
\begin{array}{lcl}
\forall M\in {\frak M}\in \ups&:& \exists m(=f(M))\in M: d(m,A)\geq \eps_0\\
\Longrightarrow &:&\sup_{m\in M}d(m,A)\geq \eps_0 \\
\Longrightarrow &:&h_d(M,A)\geq \eps_0 \\
\Rightarrow {\frak M}\subseteq
\left(U_{\eps_0}^{h_d}(A)\right)^c\mbox{ ,}&&
\end{array}
\edm
also $U_{\eps_0}^{h_d}(A)\not\in \ups$ im Widerspruch zur
Konvergenz von $\ups$.\\
Sei nun $a_0\in A$ gegeben und wiederum (1) vorausgesetzt. F"ur
beliebiges $M\subseteq X$ gilt $h_d(A,M)\geq \sup_{a\in A}d(a,M)\geq d(a_0, M)$
und f"ur kompaktes $M$ ferner $d(a_0,M)=\inf_{m\in M}d(a_0,m)=\min_{m\in
M}d(a_0,m)$. Somit finden wir zu jedem $M\in \gx$ ein $\beta(M)\in M$
mit $d(a_0,\beta(M))=d(a_0,M)\leq d(A,M)$. Damit folgt aber f"ur
die Funktion $\beta\in \ca$ unmittelbar $\beta(U_{\eps}^{h_d}(A))\subseteq
U_{\eps}^d(a_0)$, so da"s aus $\ul{U}^{h_d}(A)\subseteq \ufi$
sogleich $\ul{U}^d(a_0)\subseteq \beta(\ufi)$ folgt, womit wir
2(a) erhalten. 
\end{proof}

\end{auskommentiert}

Concerning the idea to describe hyperstructures via choice functions, a simple question arises immediately: if we start, as very simple case, with a discrete space and require an extremely hard condition about choice functions - will we get then a discrete hyperspace?\\

This simple question leads at once to much too hard problems in our set theoretical background, as we will see now.\\

Let $X$ be a set and denote by $\Pm{X}:=\{M\subseteq X|\; M\neq\emptyset\}$
the set of all nonempty subsets of $X$.
For $x\in X$ let $\epf{x}:=\{A\subseteq X|\; x\in A\}$ the singleton filter generated by $\{x\}$.\\
Further let $\ca:=\{f\in X^{\Pm{X}}|\; \forall A\in \Pm{X}: f(A)\in A\}$
the set of such functions from $\Pm{X}$ to $X$, which assign to every nonempty subset $A$ of $X$ an element of $A$.\\
Now, let be given a filter $\widehat{\phi}$ on $\Pm{X}$ with the property\\
$\forall f\in \ca: \exists x_f\in X: f(\widehat{\phi})=\epf{x_f}$.\\

We claim, that $\widehat{\phi}$ is an ultrafilter on $\Pm{X}$.\\

\begin{proof}
We show, that for every subset of $\Pm{X}$ either this subset itself or its complement
belongs to $\widehat{\phi}$.\\
{\bf Case 1: } $\exists a\in X: \forall f\in \ca:
f(\widehat{\phi})=\epf{a}$\\
This implies, that even a choice function, that avoids the point $a$ wherever possible, would map our $\widehat{\phi}$ to $\epf{a}$. Then $\widehat{\phi}$ contains $\{\{a\}\}$, and so it is a singleton filter, thus an ultrafilter.\\
{\bf Case 2: }
 $\exists a,b\in X, f,g\in \ca: a\neq b\wedge f(\widehat{\phi})=\epf{a}\wedge g(\widehat{\phi})=\epf{b}$.\\
It follows $f^{-1}(\{a\})\subseteq\{A\in \Pm{X}|\; a\in A\}=:\ga\in \widehat{\phi}$
and $g^{-1}(\{b\})\subseteq\{A\in \Pm{X}|\; b\in A\}=:\gb\in \widehat{\phi}$, thus
$\gc:=\ga\cap\gb\in \widehat{\phi}$. So, it suffices to show, that for every subset $\gd$ of $\gc$ either $\gd$ is an element of $\widehat{\phi}$, or its complement in $\gc$, $\gc\setminus \gd$. 

Assume, this would not hold, i.e.
\begin{align}
\exists \gd\subseteq \gc: \gd\not\in \widehat{\phi}\wedge {\gd}^c\not\in\widehat{\phi}
\end{align}
 (with $\gd^c:=\gc\setminus\gd$).
This implies 
\begin{align}\label{allkompatibel}
\forall \gm\in \widehat{\phi}:
\gm\cap\gd\neq\emptyset\wedge\gm\cap\gd^c\neq\emptyset\mbox{ .}
\end{align} 
Now, every element of $\gc$ (and so every element of  $\gd$ as well as of $\gd^c$ contains  $a$ {\em and} $b$. Consequently, there exists a choice function $h\in \ca$
s.t. \bdm
h(M):=\left\{\begin{array}{ccl}
a&;&\;\mbox{ if }M\in \gd\\
b&;&\;\mbox{ if } M\in \gd^c\\
f(M)&;&\;\mbox{ if } M\not\in \gc
\end{array}\right.
\edm
{\footnotesize (Rem.: The designation $h(M)=f(M)$ for $M\not\in \gc$ is not essential, we just have to make any choice~--~and $f(M)$ is anyhow possible, because of the existence of $f$.})

Because of \eqref{allkompatibel} it follows $h(\widehat{\phi}):=[\{a,b\}]$, which is not a singleton filter~--~in contradiction to our precondition.
\end{proof}

Now, the question struggles, wheither or not such a filter $\widehat{\phi}$ must be a singleton filter on $\Pm{X}$.\\

In order to characterize the Lindel\"of-property by convergence of filters, as proposed in \cite{froschbuch}, lemma 5.1.20, the notion of{\em \glqq countable completeness\grqq\ } was introduced for filters; it means, that the intersection of every at most countable family of elements of a filter is an element of the filter again.

Let $X$ be a set. 

%and $\ca_X:=\{f\in X^{\Pm{X}}|\; \forall M\in \Pm{X}: f(M)\in M\}$
%die Menge aller derjenigen Funktionen von $\Pm{X}$ nach $X$, die
%jeder nichtleeren Teilmenge $M$ von $X$ irgendein Element von $M$
%zuordnen.\\

We say, a filter $\Phi$ has the {\em property (A) w.r.t. $X$} iff 
$\Phi$ is a filter on $\Pm{X}$ and fulfills
\begin{align}\renewcommand{\theequation}{A}\label{propertyA}
\forall f\in \ca(X): \exists x_f\in X: f(\Phi)=\epf{x_f}\mbox{ .}
\end{align}

\begin{prop}
Every filter $\Phi$ with property (A) w.r.t. a set $X$  is countably complete.
\end{prop}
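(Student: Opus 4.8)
The plan is to show that if $\Phi$ has property (A) then for every countable family $(\gm_n)_{n\in\nats}$ with each $\gm_n\in\Phi$, the intersection $\bigcap_{n\in\nats}\gm_n$ belongs to $\Phi$. Since $\Phi$ is a filter, $\bigcap_{n\in\nats}\gm_n\notin\Phi$ would mean the complement (in $\Pm X$) meets every element of $\Phi$, so we may replace $\gm_n$ by $\gm_1\cap\cdots\cap\gm_n$ and assume the family is decreasing: $\gm_1\supseteq\gm_2\supseteq\cdots$, with $\gm_\infty:=\bigcap_n\gm_n$. We argue by contradiction, assuming $\gm_\infty\notin\Phi$, so that $\Pm X\setminus\gm_\infty$ meets every element of $\Phi$; in particular $\gm_n\setminus\gm_\infty\neq\emptyset$ for every $n$ (for if some $\gm_n=\gm_\infty$ we would be done). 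Set $D_n:=\gm_n\setminus\gm_{n+1}$, so the $D_n$ partition $\gm_1\setminus\gm_\infty$, and each $\gm_n=\gm_\infty\cup\bigcup_{k\geq n}D_k$.

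The idea, in the spirit of Case~2 of the earlier claim, is to build a single choice function $f\in\ca(X)$ whose image $f(\Phi)$ is forced to be a non-singleton filter. First I would fix, for each $n$, a set $A_n\in D_n$ together with a point $a_n\in A_n$; since property (A) applied to \emph{any} choice function already yields singleton images, I expect that along the filter $\Phi$ the points $a_n$ cannot stabilise, and the key combinatorial step is to choose them (after passing to a subsequence $n_1<n_2<\cdots$) so that $a_{n_j}\neq a_{n_k}$ for $j\neq k$, or at least so that no single point occurs cofinally in a way compatible with $\Phi$. Concretely, I would try to define $f$ on $\gm_1\setminus\gm_\infty$ by picking on $D_{n_j}$ a point witnessing ``newness'' (e.g. $f(M):=a_{n_j}$ for the chosen $A_{n_j}$ and an arbitrary legal value elsewhere on $D_{n_j}$), and extending $f$ arbitrarily (legally) on $\gm_\infty$ and on $\Pm X\setminus\gm_1$. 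Because $\gm_{n_j}\in\Phi$ for all $j$ and $f$ sends $\gm_{n_j}$ into $\{a_{n_j},a_{n_{j+1}},\dots\}\cup f(\gm_\infty)\cup(\dots)$, the image filter $f(\Phi)$ contains the tail sets $\{a_{n_j},a_{n_{j+1}},\dots\}$ (modulo the $\gm_\infty$ contribution), whose intersection over all $j$ is empty; hence $f(\Phi)$ cannot be $\epf{x_f}$ for any $x_f$, contradicting \eqref{propertyA}.

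The main obstacle is the bookkeeping around $\gm_\infty$: the constructed $f$ will also take values on elements of $\gm_\infty\in\Phi$? — no, $\gm_\infty$ need not be in $\Phi$, but some elements of $\Phi$ may be scattered across infinitely many $D_n$'s in an uncontrolled way, so I must be careful that $f(\gm_{n_j})$ really is contained in the intended ``tail'' set. This is handled by the decreasing normalisation: every $\gm\in\Phi$ meets every $\gm_n$, and $f$ restricted to $D_k$ takes values only among finitely many fixed points (ideally one), so $f(\gm)\cap\{a_{n_1},a_{n_2},\dots\}$ is infinite and, more to the point, $f(\gm_{n_j})\subseteq\{a_{n_j},a_{n_{j+1}},\dots\}\cup E$ where $E:=f(\Pm X\setminus(\gm_1\setminus\gm_\infty))$ is a fixed set we control by choosing $f$ there to be constant equal to some $a_{n_1}$. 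The only delicate point is guaranteeing the chosen points $a_{n_j}$ can be taken pairwise distinct; if $X$ is finite this forces $\gm_1\setminus\gm_\infty$ to be essentially finite and the statement is trivial, and if infinitely many distinct points are available among the $A_n$ the selection is immediate, so the remaining case — infinitely many $D_n$ nonempty but drawing elements from a bounded pool of points of $X$ — is exactly where one reuses the two-point trick of the preceding claim, sending a cofinal set of $D_n$'s to $a$ and another cofinal set to $b$ to again produce the non-singleton image $[\{a,b\}]$.
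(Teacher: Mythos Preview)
Your overall strategy---normalise to a strictly decreasing chain, partition into layers $D_n$, and build a choice function whose image along $\Phi$ is the cofinite filter on an infinite set of points---is exactly the paper's. But there is a real gap at the point where you define $f$.

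The constraint $f(M)\in M$ is doing far more work here than you allow it to. On each layer $D_{n_j}$ you only control $f$ at the single set $A_{n_j}$ (where $f(A_{n_j})=a_{n_j}$ is indeed legal); on every other $M\in D_{n_j}$ you are forced to take ``an arbitrary legal value'', i.e.\ some unspecified element of $M$. Consequently the containment $f(\gm_{n_j})\subseteq\{a_{n_j},a_{n_{j+1}},\dots\}\cup E$ simply fails: $f(\gm_{n_j})$ picks up all those uncontrolled values, and nothing prevents $f(\Phi)$ from being a singleton filter after all. The same problem bites your handling of $E$: you propose to make $f$ ``constant equal to some $a_{n_1}$'' on $\Pm X\setminus(\gm_1\setminus\gm_\infty)$, but that is not a choice function unless $a_{n_1}$ lies in every such $M$, which it does not. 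The final fallback to the two-point trick has the same defect: in the earlier claim the two points $a,b$ were known to lie in every element of $\gc$, and that is precisely what is missing here.

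The paper closes this gap by using property~\eqref{propertyA} a second time, \emph{before} building $f$. It introduces $P:=\{p\in X\mid \exists g\in\ca(X):g(\Phi)=\epf{p}\}$ and observes that $\epf{p}\in\Phi$ (as a subset of $\Pm X$) for every $p\in P$. If $P$ is finite one is already done; otherwise pick pairwise distinct $p_0,p_1,\dots\in P$ and replace the given sequence by $A_n':=A_n\cap\epf{p_n}\cap\bigcap_{i<n}A_i'$. Now every $M$ in the $n$-th layer automatically contains $p_0',\dots,p_n'$, so the assignment $f(M):=p'_{\kappa(M)}$ with $\kappa(M):=\max\{n:M\in B_n\}$ is a \emph{legal} choice function with $f(B_n)=\{p_i':i\ge n\}$, and the contradiction follows cleanly. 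The missing idea in your proposal is exactly this preliminary refinement: you need a supply of infinitely many distinct points each of which is guaranteed to belong to every set at its level, and property~(A) itself is what produces them.
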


\begin{proof}
 Let's make a few observations at first:
 \begin{enumerate}\renewcommand{\labelenumi}{(\alph{enumi})}\renewcommand{\theenumi}{(\alph{enumi})}
\item As shown before, $\Phi$ is an ultrafilter.
\item If $\Phi$ has a countable subset, whose intersection does'nt belong to $\Phi$, then  $\Phi$ has also a countable subset with empty intersection.
\item Let $P:=\{n\in X|\; \exists f\in\ca(X): f(\Phi)=\epf{n}\}$. Obviously, $\Pm{P}\in\Phi$ holds - otherwise we would have $\Pm{X}\setminus\Pm{P}\in\Phi$ and thereon a choice function exists, which misses $P$ completely. If $P$ is finite, then $\Pm{P}$, too - and then $\Phi$ must be a singleton filter, which is trivially countably complete. 
\item For every $p\in P$ holds $\epf{p}\in\Phi$.
\item\label{obs-e} If $P$ is infinite, there exists a countably infinite subset $P':=\{p_n|\; n\in\nats\}$ of $P$, s.t. always $n\neq m\implies p_n\neq p_m$ holds. 
\end{enumerate}

So, let $P$ be infinite and let $P'$ be given as mentioned in \ref{obs-e}.\\ 

Assume, there exists a countable infinite subset $\ga:=\{A_n|\, n\in\nats\}$ of $\Phi$ with $\bigcap_{n\in\nats}A_n=\emptyset$.\\ 

In an usual manner, we construct a strictly decreasing sequence of elements of $\Phi$: we define at first $A_0':=A_0\cap\epf{p_0}$ continue inductively from given $A_0', ... , A_n'$ with $A_{n+1}':=(A_{n+1}\cap\epf{p_{n+1}})\cap\bigcap_{i=0}^nA_i'$.\\

Then the {\em set} $\ga':=\{A_n'|\; n\in\nats\}$ is again a subset of $\Phi$ ans also countably inifinite (otherwise the intersection about all of its elements would be nonempty, in contrast to being contained in the intersection of all $A_n$). Furthermore, the {\em sequence} $(A_n')_{n\in\nats}$ is decreasing and we have $\bigcap_{i=0}^nA_n'=\emptyset$. By removing of possible doublets we get a strongly decreasing sequence $(B_n)_{n\in\nats}$, which exhausts $\ga'$: we define $B_0:=A_0'$ (and $p_0':=p_0$) and then inductively from $B_0,...,B_i$ always $n_{i+1}:=min\{n\in\nats|\; A_n'\subsetneqq B_i\}$ and $B_{i+1}:=A_{n_{i+1}}'$ (and $p_{i+1}':=p_{n_{i+1}}$).\\

So we get $\bigcap_{n\in\nats}B_n\subseteq \bigcap_{n\in\nats}A_n=\emptyset$.\\

Consequently the mapping $\kappa:B_0\to \nats: \kappa(M):=\min\{n\in\nats|\; M\not\in B_n\}-1$ is well defined.\\

Now consider any $g\in\ca(X)$.\\ 
We define the choice function
\bdm
f:\Pm{X}\to X:f(M):=\left\{
\begin{array}{ccl}
p_{\kappa(M)}'&;& M\in B_0\\
g(M)&;& sonst
\end{array}\right.
\edm

We find for all $n\in\nats$: $f(B_n)=\{p_i'|\; i\geq n\}$, thus $f(\Phi)$ includes the cofinite filter on $P'':=\{p_i'|\;i\in\nats\}$ - so $\Phi$ cannot be a singleton filter, in contradiction to our precondition on $\Phi$.
\end{proof}

\begin{prop}
On $\Pm{\nats}$ there exists no countably complete free ultrafilter.
\end{prop}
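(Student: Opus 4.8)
The plan is to argue by contradiction, exploiting that $\Pm{\nats}$ already carries a \emph{countable} family of two-block partitions which separates its points; countable completeness will then force any such ultrafilter to concentrate on a single point.

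So suppose $U$ is a countably complete free ultrafilter on $\Pm{\nats}$. First I would fix, for every $n\in\nats$, the two complementary pieces
\bdm
C_n^{1}:=\{A\in\Pm{\nats}|\; n\in A\}\ ,\qquad C_n^{0}:=\{A\in\Pm{\nats}|\; n\notin A\}\ .
\edm
Since $C_n^{0}\cup C_n^{1}=\Pm{\nats}$ while $C_n^{0}\cap C_n^{1}=\emptyset$, and $U$ is an ultrafilter, exactly one of them lies in $U$; let $\eps_n\in\{0,1\}$ be the index with $C_n^{\eps_n}\in U$, and put $C_n:=C_n^{\eps_n}$.

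Next, by countable completeness of $U$, the set $D:=\bigcap_{n\in\nats}C_n$ belongs to $U$. The crucial observation is that a set $A\in\Pm{\nats}$ lies in $D$ precisely when $n\in A\iff\eps_n=1$ holds for \emph{every} $n\in\nats$; hence $D$ is contained in $\{A_0\}$ for $A_0:=\{n\in\nats|\;\eps_n=1\}$. If $A_0=\emptyset$ this gives $D=\emptyset\in U$, which is impossible for a filter; otherwise $A_0\in\Pm{\nats}$ and $D=\{A_0\}$, so $U$ contains the one-element set $\{A_0\}$ and is therefore the singleton filter $\epf{A_0}$ — contradicting the freeness of $U$. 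Either way we reach a contradiction, so no such $U$ can exist.

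I do not expect a genuine obstacle in this argument: it stays completely elementary exactly because the bipartitions $\{C_n^{0},C_n^{1}\}$, $n\in\nats$, separate the points of $\Pm{\nats}$, so none of the measurable-cardinal machinery that would be needed over an arbitrary ground set $X$ is required here. The one spot deserving a moment's attention is the degenerate case $A_0=\emptyset$ — this is where the restriction to \emph{nonempty} subsets actually enters — and it is disposed of by the remark that an empty $D$ would already force $\emptyset$ into the filter.
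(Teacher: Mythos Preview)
Your argument is correct. The key observation --- that the countable family $\{C_n^{0},C_n^{1}\}_{n\in\nats}$ separates the points of $\Pm{\nats}$, so that countable completeness forces the intersection $\bigcap_n C_n^{\eps_n}$ to be at most a singleton --- is exactly what is needed, and the case distinction on $A_0=\emptyset$ is handled properly.

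The paper, however, proceeds along a genuinely different line. It transports a hypothetical countably complete free ultrafilter $\Phi$ on $\Pm{\nats}$ to the interval $[0,1]$ via a bijection $h:\Pm{\nats}\to[0,1]$; the image $h(\Phi)$ is then a countably complete free ultrafilter on a compact metric space, hence converges to some $r\in[0,1]$, and the countable family of punctured balls $U_{1/n}(r)\setminus\{r\}$ all lie in $h(\Phi)$ while having empty intersection --- contradiction. Your route is more elementary and self-contained: it needs no topology, no bijection with $[0,1]$, and no appeal to compactness or convergence; in effect you are exploiting directly that $\Pm{\nats}$ embeds into $\{0,1\}^{\nats}$ with its countable coordinate partitions. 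The paper's route, by contrast, packages the same cardinality fact into the single statement ``$[0,1]$ is compact metrisable'', which makes the argument shorter to state but less transparent about where the countability actually enters. Both arguments ultimately rest on the same underlying reason: a set of size at most $2^{\aleph_0}$ admits a countable point-separating family of bipartitions.
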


\begin{proof}
Assume, $\Phi$ would be a free, countably complete ultrafilter on $\Pm{\nats}$. Let $h$ be any bijection from $\Pm{\nats}$ to the real interval $[0,1]$. Then $h(\Phi)$ is a free countably complete ultrafilter on $[0,1]$. As ultrafilter on this euclidian compact interval it converges to an element $r\in[0,1]$, i.e. $h(\Phi)\supseteq \{U_{\frac{1}{n}}(r)|\; n\in\nats, n>0\}$. Because $h(\Phi)$ is free, it holds $[0,1]\setminus\{r\}\in h(\Phi)$, so  $h(\Phi)\supseteq \ga:=\{U_{\frac{1}{n}}(r)\setminus\{r\}|\; n\in\nats, n>0\}$ follows. From countable completeness we get now $\emptyset=\bigcap_{M\in\ga}M\in h(\Phi)$ - in contradiction to the filter property.
\end{proof}

From this follows immediatly:
\begin{cor}
The filters having property (A) w.r.t. $\nats$ are exactly the singleton filters on $\Pm{\nats}$.
\end{cor}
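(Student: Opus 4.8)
The plan is to obtain this corollary by assembling the three facts established above. First, recall that a filter with property~(A) w.r.t.\ a set is automatically an ultrafilter on the associated power set without the empty set (this is the claim proved at the beginning of the discussion of property~(A)). Applying this with ground set $\nats$, and combining it with the Proposition stating that property~(A) forces countable completeness, one gets that any $\Phi$ with property~(A) w.r.t.\ $\nats$ is a countably complete ultrafilter on $\Pm{\nats}$.

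Next I would invoke the Proposition that $\Pm{\nats}$ admits no free countably complete ultrafilter. Hence such a $\Phi$ cannot be free, i.e.\ $\bigcap_{\mathfrak M\in\Phi}\mathfrak M\neq\emptyset$. Picking some $M_0$ in this intersection and using once more that $\Phi$ is an ultrafilter, we obtain $\{M_0\}\in\Phi$ — for otherwise $\Pm{\nats}\setminus\{M_0\}\in\Phi$, contradicting $M_0\in\bigcap_{\mathfrak M\in\Phi}\mathfrak M$ — so $\Phi$ is exactly the singleton filter on $\Pm{\nats}$ generated by $\{M_0\}$, that is, $\Phi\in\gs(\Pm{\nats})$.

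For the converse inclusion I would simply check that every singleton filter on $\Pm{\nats}$ has property~(A): if $\Phi$ is generated by $\{M_0\}$ with $\emptyset\neq M_0\subseteq\nats$, then for each $f\in\ca(\nats)$ the image filter $f(\Phi)$ is generated by $\{f(M_0)\}$, hence $f(\Phi)=\epf{f(M_0)}$ is a singleton filter, so $x_f:=f(M_0)$ witnesses \eqref{propertyA}. I do not expect any genuine obstacle here: the corollary is pure bookkeeping over the preceding propositions, the only slightly non-routine ingredients being the elementary fact that a non-free ultrafilter on any set is a singleton filter, and the trivial verification that singleton filters satisfy (A).
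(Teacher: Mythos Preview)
Your proposal is correct and matches the paper's approach exactly: the paper simply states that the corollary ``follows immediately'' from the preceding two propositions (property~(A) $\Rightarrow$ countably complete ultrafilter, and no free countably complete ultrafilter on $\Pm{\nats}$), and you have faithfully spelled out those immediate steps, including the routine verification that a non-free ultrafilter is a singleton filter and the trivial converse.
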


{\bf Remark: } Unfortunately, at this point it's over with provable answers to the questions whether or not a filter with property (A) must be a singleton filter 9in our set theoretic realm).  The colleagues concerned with axiomatics, model theory etc. showed
\begin{enumerate}
\item Countable complete free ultrafilters exist (anywho), iff measurable cardinals exist.
\item Every measurable cardinal is inaccessible. 
\end{enumerate}
Okay, but now:
\begin{enumerate}
\setcounter{enumi}{2}
\item If we add to our set theoretic axiomatic (ZFC) the axiom \glqq There exists an inaccessible cardinal.\grqq, the new system does prove the consistency of ZFC.
\item Consistency of ZFC implies consistency of ZFC + "there are no inaccessible cardinals".
\end{enumerate}
The reader may find some useful explications and references here:\\ 
\href{http://en.wikipedia.org/wiki/Inaccessible_cardinal}{http://en.wikipedia.org/wiki/Inaccessible\_cardinal} \\

%In unsrer Mengenlehre ist das Problem damit eigentlich ganz \& gar gekl\"art - n\"amlich dahingehend, da\ss\ auf die Existenz eines Filters mit Eigenschaft (A) bez\"uglich irgendeiner Menge $X$, der {\em kein} Einpunktfilter ist, nicht geschlossen werden kann - denn die Annahme, es g\"abe keine, kann auf Grundlage unsrer Axiome nicht zum Widerspruch gef"uhrt werden. Schade eigentlich.

We find, unless some characterizations via choice functions may be sometimes useful, it seems to be not the best idea to use them for definition of hyperstructures in a general setting - just for instance, because they have a tendency to lead rapidly to highly inconvenient set theoretic trouble.\\

So, let's try another way.

\section{Vietoris Hyperstructure as final w.r.t. Function Spaces}

Remember a wide class of function space structures, defined for
$Y^X$ or $C(X,Y)$: the so called set--open topologies, examined in \cite{arensdugundji1951}, \cite{poppe-general}.
According to \cite{poppe-general}, we use the following convention:
Let $X$ and $Y$ be sets and $A\subseteq X$, $B\subseteq Y$; then
let be $(A,B):=\{f\in Y^X|\; f(A)\subseteq B\}$. Now let $X$ be a set, $(Y,\sigma)$ a topological space and ${\mathfrak A}\subseteq\Pm{X}$.
Then the topology $\tau_{{\mathfrak A}}$ on $Y^X$ (resp. $C(X,Y)$), which is defined by the open
subbase $\{(A,W)|\; A\in {\mathfrak A}, W\in \sigma\}$ is called the {\em
set--open topology, generated by ${\mathfrak A}$}, or shortly the {\em
${\mathfrak A}$--open topology}.\\

\noindent We know 

\begin{lem}[cf. \cite{nzjm}, lemma 3.4]\label{hatiso}
Let $(X,\tau),(Y,\sigma)$ be topological spaces,
let ${\mathfrak A}\subseteq\Pm{X}$ contain the singletons and ${\cal H}\subseteq Y^X$ be endowed with $\tau_{{\mathfrak A}}$.
Then the map
\begin{displaymath}
\mu_X:{\cal H}\to \Pm{Y}^{{\mathfrak A}}:\;f\to \mu_X(f):\forall A\in {\mathfrak A}: \mu_X(f)(A):= f(A), 
\end{displaymath}
is open, continuous and bijective onto its image, for $\Pm{Y}$ is equipped with Vietoris topology $\sigma_V$, and $\Pm{Y}^{{\mathfrak A}}$ with the generated pointwise topology.
\end{lem}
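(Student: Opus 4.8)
The plan is to unwind the definitions of the three topologies involved — the $\mathfrak{A}$-open topology on $\mathcal{H}$, the Vietoris topology $\sigma_V$ on $\mathfrak{P}_0(Y)$, and the pointwise (product) topology on $\mathfrak{P}_0(Y)^{\mathfrak{A}}$ with each factor carrying $\sigma_V$ — and then check the three claimed properties of $\mu_X$ one at a time. Injectivity should come first and be the quickest: if $\mu_X(f) = \mu_X(g)$ then $f(A) = g(A)$ for all $A \in \mathfrak{A}$, and since $\mathfrak{A}$ contains all singletons, $f(\{x\}) = g(\{x\})$ forces $f(x) = g(x)$ for every $x \in X$, hence $f = g$. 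That $\mu_X$ is a bijection onto its image is then automatic.

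For continuity I would work with subbasic open sets of the pointwise topology on $\mathfrak{P}_0(Y)^{\mathfrak{A}}$. Such a set is determined by a single coordinate $A \in \mathfrak{A}$ together with a subbasic $\sigma_V$-open set in $\mathfrak{P}_0(Y)$; by the definition of the Vietoris topology $\sigma_V = \sigma_{l} \vee \sigma_{u}$, this is either of the form $O^{-}$ or of the form $(Y\setminus O)^{+}$ for some $O \in \sigma$. I would pull each of these back under $\mu_X$ and recognize the preimage as an $\mathfrak{A}$-open subbasic set of $\mathcal{H}$: $\mu_X^{-1}$ of "$\mu_X(f)(A) \in O^{-}$" is $\{f \mid f(A) \cap O \neq \emptyset\}$, while $\mu_X^{-1}$ of "$\mu_X(f)(A) \in (Y\setminus O)^{+}$" is $\{f \mid f(A) \cap (Y\setminus O) = \emptyset\} = \{f \mid f(A)\subseteq O\} = (A,O)\cap\mathcal{H}$, which is exactly a defining subbasic set for $\tau_{\mathfrak{A}}$. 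So continuity follows.

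Openness onto the image is the step I expect to be the main obstacle, because openness is not subbase-local the way continuity is: one must show the image of a basic $\tau_{\mathfrak{A}}$-open set is open in the subspace $\mu_X(\mathcal{H})$. A basic $\tau_{\mathfrak{A}}$-open set is a finite intersection $\bigcap_{i=1}^{n}(A_i, W_i)\cap\mathcal{H}$ with $A_i \in \mathfrak{A}$, $W_i \in \sigma$. Its image under the bijection $\mu_X$ should be $\mu_X(\mathcal{H}) \cap \bigcap_{i=1}^{n}\{ \eta \mid \eta(A_i) \subseteq W_i\}$, and each condition "$\eta(A_i)\subseteq W_i$" must be exhibited as the trace on $\mathfrak{P}_0(Y)^{\mathfrak{A}}$ of a pointwise-open set — which it is, since in the single coordinate $A_i$ it is the upper-Vietoris subbasic set $(Y\setminus W_i)^{+}$. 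The care needed here is exactly that $\mu_X$ is only a bijection onto its image, so one argues that the image of an intersection is the intersection of the images (true for injections) and that $\{\eta \mid \eta(A_i)\subseteq W_i\}$ restricted to that image coincides with $\{\mu_X(f) \mid f(A_i)\subseteq W_i\}$; both are immediate from injectivity and the definition of $\mu_X$. Once that bookkeeping is done, openness onto the image follows, completing the proof.
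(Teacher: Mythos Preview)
The paper does not supply its own proof of this lemma; it is quoted from the cited reference \cite{nzjm}, so there is no argument here to compare yours against. Your outline is essentially the standard one and is correct in spirit, but there is one genuine gap in the continuity step.

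You announce that both preimages will be ``recognized as $\mathfrak{A}$-open subbasic sets'', but you only carry this out for the upper-Vietoris side: $\mu_X^{-1}\bigl(\pi_A^{-1}((Y\setminus O)^+)\bigr)=(A,O)\cap\mathcal{H}$, which is indeed a defining subbasic set. For the lower-Vietoris side you correctly compute
\[
\mu_X^{-1}\bigl(\pi_A^{-1}(O^-)\bigr)=\{f\in\mathcal{H}\mid f(A)\cap O\neq\emptyset\},
\]
but this set is \emph{not} of the form $(B,W)$ and hence is not a subbasic $\tau_{\mathfrak{A}}$-set. You still owe an argument that it is $\tau_{\mathfrak{A}}$-open. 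The missing observation is
\[
\{f\in\mathcal{H}\mid f(A)\cap O\neq\emptyset\}=\bigcup_{a\in A}(\{a\},O)\cap\mathcal{H},
\]
which is open precisely because $\mathfrak{A}$ contains all singletons. (This is exactly the computation the paper performs later, in the proof of Lemma~\ref{vietoris-final1}.) Without this step the singleton hypothesis appears to be used only for injectivity, whereas in fact it is needed a second time here.

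Your treatment of openness onto the image is fine: since $\mu_X$ is a bijection onto its image, images commute with finite intersections and arbitrary unions, so it suffices to check that $\mu_X\bigl((A,W)\cap\mathcal{H}\bigr)=\mu_X(\mathcal{H})\cap\pi_A^{-1}\bigl((Y\setminus W)^+\bigr)$, which is relatively open as you say.
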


Now, the pointwise topology on $\Pm{Y}^{{\mathfrak A}}$ is just the product topology on $\prod_{A\in \ga}:\Pm{Y}_A$ (with all $\Pm{Y}_A$ being just copies of $\Pm{Y}$). By chosing $\ch:=C(X,Y)$, $\ga:=K(X)$ and consequently replacing $\Pm{Y}$ by $K(Y)$, we have the following situation:

\begin{align*}
\begin{xy}
\xymatrix{
C(X,Y) \ar[r]^{\!\!\!\!\!\!\!\!\!\mu_X}  & K(Y)^{K(X)}\;\;\; \cong &  \prod_{A\in K(X)}K(Y)_A \ar[d]^{\pi_A}\\
&&(K(Y),\sigma_V)\\
}
\end{xy}
\end{align*}
Of course, by $\pi_A$ we mean the canonical projection from the product to the factor $K(Y)_A=K(Y)$.

From lemma \ref{hatiso} we get the continuity of $\mu_X$, if $C(X,Y)$ is equipped with compact-open topology, thus in this case all compositions $\pi_A\circ \mu_X$ are continuous, too. 

Moreover, $\mu_X$ is even a homeomorphism onto its image and the product structure is initial w.r.t. the projections. So, the question arises, whether or not the Vietoris topology $\sigma_V$ on $K(Y)$ is final w.r.t. all $\pi_A\circ \mu_X$.

%{\tiny Motivated from the fact, that the compact-open topology on $C(X,Y)$ generates the natural function space structure of continuous convergence, whenever $(X,\tau)$ is locally compact, we will restrict our observations to locally compact Hausdorff spaces $(X,\tau), (Y,\sigma)$ here.}

\begin{lem}\label{vietoris-final1}
Let $(X,\tau)$, $(Y,\sigma)$ be topological spaces and let $\sigma_V$ be the Vietoris topology on $K(Y)$.
Then for every  $\go\in\sigma_V$ and every $A\in K(X)$ the set $(\pi_A\circ \mu_X)^{-1}(\go)\subseteq C(X,Y)$ is open w.r.t. the compact-open topology.
\end{lem}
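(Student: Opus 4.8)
The strategy is to reduce the claim to a statement about subbasic open sets of the Vietoris topology and to exploit the explicit description of $\mu_X$ and the $\mathfrak{A}$-open (here compact-open) topology on $C(X,Y)$. Since $(\pi_A\circ\mu_X)^{-1}$ commutes with finite intersections and arbitrary unions, and $\sigma_V$ is generated by the subbase $\{O^{-_{K(Y)}}\mid O\in\sigma\}\cup\{(Y\setminus O)^{+_{K(Y)}}\mid O\in\sigma\}$, it suffices to show that $(\pi_A\circ\mu_X)^{-1}(O^{-})$ and $(\pi_A\circ\mu_X)^{-1}((Y\setminus O)^{+})$ are compact-open for every $O\in\sigma$ and every $A\in K(X)$. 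Here I would unwind the definitions: $(\pi_A\circ\mu_X)(f)=f(A)\in K(Y)$, so $f\in(\pi_A\circ\mu_X)^{-1}(O^{-})$ iff $f(A)\cap O\neq\emptyset$, and $f\in(\pi_A\circ\mu_X)^{-1}((Y\setminus O)^{+})$ iff $f(A)\cap(Y\setminus O)=\emptyset$, i.e.\ $f(A)\subseteq O$.

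The upper part is immediate: $\{f\in C(X,Y)\mid f(A)\subseteq O\}$ is precisely the subbasic set $(A,O)$ of the compact-open topology (since $A\in K(X)$), hence open by definition of $\tau_{K(X)}$. So the real work is the lower part, the set $L:=\{f\in C(X,Y)\mid f(A)\cap O\neq\emptyset\}$. The plan is to show $L$ is open by finding, for each $f\in L$, a compact-open neighbourhood of $f$ contained in $L$. Pick $y\in f(A)\cap O$ and a point $a\in A$ with $f(a)=y$. Since $f$ is continuous and $f(a)\in O$, the preimage $f^{-1}(O)$ is an open neighbourhood of $a$ in $X$; I would like to extract from it a \emph{compact} set $C\subseteq f^{-1}(O)$ whose image under $f$ still meets $O$ in a controlled way, so that $(C,O)$ — or rather its intersection with something — lands inside $L$.

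The main obstacle is exactly this extraction step: in an arbitrary (non-locally-compact) space $X$ there need not be a compact neighbourhood of $a$, so one cannot directly produce a compact $C$ with $a\in\operatorname{int}C\subseteq C\subseteq f^{-1}(O)$. The way around it is to use the \emph{singleton} $C=\{a\}$, which is always compact: then $(\{a\},O)=\{g\in C(X,Y)\mid g(a)\in O\}$ is compact-open, contains $f$, and every $g$ in it satisfies $g(a)\in O\subseteq f(A)$-type... more carefully, $g(a)\in O$ and $a\in A$ give $g(A)\cap O\neq\emptyset$, i.e.\ $g\in L$. Thus $f\in(\{a\},O)\subseteq L$, and since $\mathfrak{A}=K(X)$ contains the singletons (as required in Lemma~\ref{hatiso}), $(\{a\},O)$ is indeed a subbasic compact-open set. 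Hence $L$ is a union of subbasic compact-open sets, so open. Assembling the two parts and using stability of preimages under unions and finite intersections completes the proof; I would close by remarking that no compactness or separation hypothesis on $X$ or $Y$ beyond what is stated is used.
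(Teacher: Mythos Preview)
Your proposal is correct and follows essentially the same route as the paper: reduce to the Vietoris subbase, identify $(\pi_A\circ\mu_X)^{-1}((Y\setminus O)^{+})=(A,O)$ directly as a compact-open subbasic set, and show $(\pi_A\circ\mu_X)^{-1}(O^{-})=\bigcup_{a\in A}(\{a\},O)$ using singleton compacta. The paper writes this union down in one line, whereas you arrive at it after the (ultimately unneeded) detour about extracting compact neighbourhoods; but the argument is the same.
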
 

\begin{proof}
Let $A\in K(X)$ be given and let $F\in Cl(Y)$ be a closed subset of $Y$. Then $(\pi_A\circ \mu_X)^{-1}(F^+)=\{f\in C(X,Y)|\; f(A)\subseteq Y\setminus F\}=(A,Y\setminus F)\in\tau_{co}$.
Let now $O\in\sigma$ be given, then

\noindent $(\pi_A\circ \mu_X)^{-1}(O^-)=\{f\in C(X,Y)|\; f(A)\cap O\neq\emptyset\}$ $=$ $\bigcup_{a\in A}(\{a\},O)$ $\in\tau_{co}$.

So, because the $F^+$ and $O^-$ form a subbase of $\sigma_V$, for $\go\in\sigma_V$ the preimage $(\pi_A\circ \mu_X)^{-1}(\go)$ is an element of $\tau_{co}$.
\end{proof}

\begin{cor}
Let $(Y,\sigma)$ be a topological space. For every topological space let $C(X,Y)$ be equipped with compact-open topology.

Then the Vietoris topology $\sigma_V$ on $K(Y)$ is contained in the final topology w.r.t. all $\pi_A\circ\mu_{_{(X,\tau)}}$, $(X,\tau)\in\cb$, $A\in K(X,\tau)$, for every class $\cb$ of topological spaces.
\end{cor}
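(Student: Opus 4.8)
The plan is to read this corollary directly off Lemma~\ref{vietoris-final1}. First I would make precise what the final topology with respect to a possibly proper class of maps means here: all the maps $\pi_A\circ\mu_{_{(X,\tau)}}$ have the \emph{same} codomain, namely the set $K(Y)$, so a subset $\go\subseteq K(Y)$ is by definition open in the final topology w.r.t.\ $\{\pi_A\circ\mu_{_{(X,\tau)}}\mid (X,\tau)\in\cb,\ A\in K(X,\tau)\}$ precisely when, for every $(X,\tau)\in\cb$ and every $A\in K(X,\tau)$, the preimage $(\pi_A\circ\mu_{_{(X,\tau)}})^{-1}(\go)$ is open in $(C(X,Y),\tau_{co})$. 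This is a condition quantified over $\cb$, hence meaningful even when $\cb$ is a proper class, and the collection of all $\go$ satisfying it is genuinely a topology on the \emph{set} $K(Y)$: stability under arbitrary unions and finite intersections follows at once from the corresponding stability of the $\tau_{co}$-open sets of each $C(X,Y)$ together with the fact that preimages commute with unions and intersections. So there is no set-theoretic obstruction hidden in the phrase "final topology".

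Next I would fix an arbitrary $\go\in\sigma_V$, an arbitrary $(X,\tau)\in\cb$, and an arbitrary $A\in K(X,\tau)$, and simply invoke Lemma~\ref{vietoris-final1} for the spaces $(X,\tau)$ and $(Y,\sigma)$: it asserts exactly that $(\pi_A\circ\mu_{_{(X,\tau)}})^{-1}(\go)$ is $\tau_{co}$-open in $C(X,Y)$. (One should note in passing that Lemma~\ref{hatiso}, and therefore the definition of the map $\mu_{_{(X,\tau)}}$, is legitimately applicable with ${\mathfrak A}:=K(X,\tau)$, because singletons are compact and hence ${\mathfrak A}$ contains the singletons; but Lemma~\ref{vietoris-final1} already packages precisely the preimage computation we need, so nothing beyond it has to be redone.) Since $(X,\tau)$ and $A$ were arbitrary, $\go$ meets the defining condition of the final topology, i.e.\ $\go$ is open in it; and since $\go\in\sigma_V$ was arbitrary, $\sigma_V$ is contained in that final topology. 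As the argument used nothing about $\cb$, it holds for every class $\cb$ of topological spaces, which is the claim.

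I do not expect any serious obstacle here: the corollary is merely a repackaging of Lemma~\ref{vietoris-final1}, and the only point that deserves a careful sentence is the admissibility of forming a final topology along a proper class of maps with a common set-sized codomain, handled above. I would emphasize, though, what is \emph{not} claimed: the genuinely delicate question is whether $\sigma_V$ actually \emph{equals} this final topology rather than being merely contained in it — equivalently, whether every set whose preimage under all $\pi_A\circ\mu_{_{(X,\tau)}}$ is $\tau_{co}$-open is already Vietoris-open — and that is a separate matter, not addressed by the present corollary.
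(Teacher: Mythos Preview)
Your proposal is correct and matches the paper's approach: the corollary is stated immediately after Lemma~\ref{vietoris-final1} without any proof, precisely because it is a direct restatement of that lemma's conclusion in the language of final topologies. Your additional remarks on the admissibility of the final topology along a proper class and on what the corollary does \emph{not} assert are sound clarifications that go slightly beyond what the paper makes explicit.
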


\begin{prop}\label{halb-t4}
Let $(Y,\sigma)$ be a topological $T_3$-space, $K\subseteq Y$ compact and $O\subseteq Y$ open with $K\subseteq O$. Then an open set $U$ exists with $K\subseteq U\subseteq \ol{U}\subseteq O$. Especially,  $\ol{K}\subseteq O$ holds.
\end{prop}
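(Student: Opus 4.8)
The plan is to reduce the global assertion to the well-known local shrinking characterisation of regularity and then to patch the local witnesses together by compactness. First I would record the reformulation of the $T_3$-axiom that is actually used: for every point $y\in Y$ and every open $V$ with $y\in V$ there is an open set $W$ with $y\in W\subseteq\overline{W}\subseteq V$. This is immediate from regularity applied to $y$ and the closed set $Y\setminus V$, after noting that if $W$ and $W'$ are disjoint open sets with $y\in W$ and $Y\setminus V\subseteq W'$, then $\overline{W}\subseteq Y\setminus W'\subseteq V$.

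Next, for each $x\in K$ we have $x\in O$, so the reformulation yields an open set $U_x$ with $x\in U_x\subseteq\overline{U_x}\subseteq O$. The family $\{U_x\mid x\in K\}$ is an open cover of $K$, so by compactness of $K$ there are finitely many points $x_1,\dots,x_n\in K$ with $K\subseteq\bigcup_{i=1}^n U_{x_i}$. Put $U:=\bigcup_{i=1}^n U_{x_i}$. Then $U$ is open and $K\subseteq U$. Since the closure of a finite union equals the union of the closures, $\overline{U}=\bigcup_{i=1}^n\overline{U_{x_i}}\subseteq O$, so altogether $K\subseteq U\subseteq\overline{U}\subseteq O$, which is the claimed set.

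For the final clause, monotonicity of the closure operator gives $\overline{K}\subseteq\overline{U}\subseteq O$, so $\overline{K}\subseteq O$ as asserted.

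I expect no genuine obstacle here: the argument is a routine compactness patching of the local shrinking property. The only point demanding a little care is fixing the meaning of ``$T_3$'' (regularity, not necessarily including Hausdorff separation) and making sure the local shrinking reformulation is indeed available under that reading; the finite-union-of-closures identity and the monotonicity of closure are standard.
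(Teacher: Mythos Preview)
Your proof is correct and follows essentially the same route as the paper: apply regularity at each point of $K$ (you phrase it via the equivalent local shrinking characterisation, the paper via separating open sets $U_k,V_k$ with $k\in U_k$, $Y\setminus O\subseteq V_k$), extract a finite subcover by compactness, and take the union. The only cosmetic difference is that the paper bounds $\overline{U}$ by the closed set $Y\setminus\bigcap_{i}V_{k_i}$, whereas you use $\overline{\bigcup_i U_{x_i}}=\bigcup_i\overline{U_{x_i}}$; both are immediate.
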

\begin{proof}
$K\subseteq O$ just means $K\cap (Y\setminus O)=\emptyset$ and $(Y\setminus O)$ is closed. Thus, by $T_3$, for every element $k\in K$ there are $U_k, V_k\in \sigma$ s.t. $k\in U_k, Y\setminus O\subseteq V_k$ and $U_k\cap V_k=\emptyset$. The $U_k$'s cover $K$, so by compactness a finite subcover $U_{k_1}, ..., U_{k_n}$ exists. Let $U:=\bigcup_{i=1}^n$ and $V:=\bigcap_{i=1}^n$, so $U,V$ are open, $U\cap V=\emptyset$, $K\subseteq U$ and $Y\setminus O\subseteq V$ hold, i.e. 
\bdm
K\subseteq U\subseteq Y\setminus V\subseteq O\text{ .}
\edm

Now, $Y\setminus V$ is closed, so we get
\bdm
\ol{K}\subseteq \ol{U} \subseteq  \ol{Y\setminus V}=Y\setminus V\subseteq O\text{ .}
\edm
\end{proof}

\begin{lem}\label{vietoris-contains-quotient}
Let $(Y,\sigma)$ be an infinite\footnote{A finite Hausdorff (or even $T_1$-) space $Y$ would be discrete and the Vietoris hypertopology on $K(Y)=\Pm{Y}$ would become discrete, too. That is not such an exorbitant interesting case {\em here} for me.} Hausdorff $T_3$-space and let $(K(Y),\sigma_V)$ be its Vietoris Hyperspace of compact subsets. Let furthermore $\delta$ be the discrete topology
%\footnote{There is obviously no need for the product in case of an infinite $Y$, because then the discrete $Y$ is homeomorphic to the discrete $Y\times Y$. Nevertheless, some considerations are easier to write down this way - and the argumentation also works for finite spaces.} 
on $Y\times Y$ and denote by $(Z,\zeta)$ the Stone-\v{C}ech-com\-pac\-ti\-fi\-ca\-tion of $(Y\times Y,\delta)$.

Then $\sigma_V$ is the final topology on $K(Y)$ w.r.t. $\pi_Z\circ\mu_Z: C(Z,Y)\to K(Y)$, where $C(Z,Y)$ is endowed with compact-open topology $\tau_{co}$.
\end{lem}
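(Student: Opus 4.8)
The plan is to show the two inclusions between $\sigma_V$ and the final topology $\sigma_{\mathrm{fin}}$ on $K(Y)$ with respect to the map $q:=\pi_Z\circ\mu_Z\colon C(Z,Y)\to K(Y)$, which sends $f$ to $f(Z)$. The inclusion $\sigma_V\subseteq\sigma_{\mathrm{fin}}$ is free: Lemma~\ref{vietoris-final1} applied with the compact set $A:=Z\in K(Z)$ says $q^{-1}(\go)\in\tau_{co}$ for every $\go\in\sigma_V$, i.e.\ $q$ is continuous, so $\sigma_V$ is one of the topologies making $q$ continuous. The content of the statement is therefore: if $G\subseteq K(Y)$ satisfies $q^{-1}(G)\in\tau_{co}$, then $G\in\sigma_V$.

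Before attacking this I would record a convenient reformulation. Put $D:=Y\times Y$ with the discrete topology, so $Z$ is $\beta D$ and $D$ is the dense copy $w(D)\subseteq Z$. As $D$ is dense and $Y$ is Hausdorff, a continuous $f\colon Z\to Y$ is determined by $\phi:=f|_D$, and the standard fact $f(\ol M)=\ol{f(M)}$ (valid since $\ol M$ is compact, $f$ continuous, $Y$ Hausdorff) gives $f(\ol M)=\ol{\phi(M)}$ for all $M\subseteq D$; in particular $q(f)=\ol{\phi(D)}$, and a map $\phi\colon D\to Y$ extends to $C(Z,Y)$ precisely when $\ol{\phi(D)}$ is compact. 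Since $Z$ is compact with a base of clopen sets (Proposition~\ref{preprop-2}), the sets $(C,W)$ with $C$ clopen in $Z$ and $W\in\sigma$ form a subbase of $\tau_{co}$ (any $(A,W)$ with $A$ compact and $f(A)\subseteq W$ contains such a $(C,W)$ around $f$, by covering $A$ with clopen subsets of $f^{-1}(W)$); and by Proposition~\ref{preprop-3} each such $C$ is $\ol M$ with $M:=C\cap D\subseteq D$. Thus each $\tau_{co}$-neighbourhood of a continuous map contains one of the form $\bigcap_{j=1}^{n}(\ol{M_j},W_j)$ with $M_j\subseteq D$, and the defining condition $\ol{\psi(M_j)}\subseteq W_j$ for all $j$ (on $\psi=g|_D$) is equivalent, writing $D=\bigsqcup_{S\in\cs}M_S$ for the nonempty atoms of the Boolean subalgebra of $\Pmw{D}$ generated by $M_1,\dots,M_n$ and $W_S:=\bigcap_{j\in S}W_j$, to $\ol{\psi(M_S)}\subseteq W_S$ for all $S\in\cs$.

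Now fix $G$ with $q^{-1}(G)\in\tau_{co}$ and $K_0\in G$; I must produce a $\sigma_V$-neighbourhood of $K_0$ contained in $G$, and the key is the choice of a preimage of $K_0$. Let $\kappa:=|Y|=|D|$ (infinite, as $Y$ is infinite), pick a dense subset $\{k_\alpha\mid\alpha<d_0\}$ of the compact space $K_0$ with $d_0\le|K_0|\le\kappa$, partition $D$ into $d_0$ blocks of cardinality $\kappa$, and let $\phi_0$ send the $\alpha$-th block onto $k_\alpha$. Then $\ol{\phi_0(D)}=K_0$, and moreover $|\phi_0^{-1}(U)|=\kappa$ for every $U\in\sigma$ meeting $K_0$, so that $\ol{\phi_0(D\setminus M)}=K_0$ whenever $|M|<\kappa$. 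Let $f_0\in C(Z,Y)$ extend $\phi_0$, choose a basic $\tau_{co}$-neighbourhood of $f_0$ inside $q^{-1}(G)$, and pass to the atom partition $D=\bigsqcup_{S\in\cs}M_S$ with the sets $W_S$ as above; put $K_S:=\ol{\phi_0(M_S)}$, so $\emptyset\neq K_S\subseteq W_S$ and $\bigcup_{S\in\cs}K_S=K_0$. Since $\cs$ is finite and $\kappa$ infinite, $\cs_b:=\{S\in\cs\mid|M_S|=\kappa\}$ is nonempty, and as the union of the remaining atoms has cardinality $<\kappa$, the special property of $\phi_0$ forces $\bigcup_{S\in\cs_b}K_S=K_0$, hence $K_0\subseteq\bigcup_{S\in\cs_b}W_S$.

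Finally I put
\[
\cv:=\{\,L\in K(Y)\mid L\subseteq{\textstyle\bigcup_{S\in\cs_b}W_S}\ \text{and}\ L\cap W_S\neq\emptyset\ \text{for every}\ S\in\cs\,\},
\]
a basic $\sigma_V$-open set containing $K_0$ (a finite intersection of subbasic sets $O^-$ and $(Y\setminus O)^+$). For $L\in\cv$ I build some $g\in q^{-1}(G)$ with $g(Z)=L$: the finite open cover $\{W_S\cap L\mid S\in\cs_b\}$ of the compact (hence normal) $L$ admits a closed shrinking $\{C_S\}_{S\in\cs_b}$ with $C_S\subseteq W_S\cap L$ and $\bigcup_{S\in\cs_b}C_S=L$; choosing $\ell_S\in W_S\cap L$ for each $S\in\cs$, let $\psi|_{M_S}$ be any map with $\ol{\psi(M_S)}=C_S\cup\{\ell_S\}$ for $S\in\cs_b$ (possible because $|M_S|=\kappa$ is at least the density of this compact subset of $Y$) and the constant $\ell_S$ for $S\in\cs\setminus\cs_b$; then $\ol{\psi(M_S)}\subseteq W_S$ for all $S$ and $\ol{\psi(D)}=\bigcup_{S\in\cs}\ol{\psi(M_S)}=L$, so $\psi$ extends to $g\in C(Z,Y)$ with $g(\ol{M_j})=\ol{\psi(M_j)}\subseteq W_j$ and $g(Z)=L$, whence $g$ lies in the chosen neighbourhood and $L=q(g)\in G$; thus $\cv\subseteq G$ and $K_0$ is $\sigma_V$-interior to $G$. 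I expect the crux of the argument to be exactly this cardinality bookkeeping: a $\tau_{co}$-neighbourhood may split $D$ into atoms of which only some are ``large'', and one must ensure that the large atoms alone already reach all of $K_0$ through their $W_S$ (so a nearby $L$ is realizable using only those) while the small atoms — whose images may be forced into finite sets — do not overshoot $L$; the identity $\ol{\phi_0(D\setminus M)}=K_0$ for $|M|<\kappa$, built into $\phi_0$, is precisely what secures the former.
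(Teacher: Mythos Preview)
Your proof is correct and follows a genuinely different route from the paper's. Both arguments share the overall architecture---pick a clever preimage $f_0\in q^{-1}(K_0)$, take a basic $\tau_{co}$-neighbourhood, pass to Boolean atoms of the underlying subsets of $D$, and manufacture a Vietoris neighbourhood---but they diverge in two places. First, the paper engineers its preimage to be \emph{constant on vertical slices} $\{a\}\times Y$ and then \emph{saturates} the compact sets so that every atom $C_j$ satisfies $|C_j\cap D|=|Y|$; you instead choose $\phi_0$ so that $|\phi_0^{-1}(U)|=\kappa$ for every open $U$ meeting $K_0$, accept that some atoms may be small, and show the small atoms are harmless because the big ones already reach all of $K_0$. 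Second, when realising a nearby $L$ as an image, the paper inserts a $T_3$-buffer $F_A(C_j)\subseteq U_j\subseteq\ol{U_j}\subseteq V_j$ (Proposition~\ref{halb-t4}) to control the closure of the newly built map on each atom; you replace this with a closed shrinking of the finite open cover $\{W_S\cap L\}_{S\in\cs_b}$ of the compact (hence normal) space $L$, so that $\ol{\psi(M_S)}=C_S\cup\{\ell_S\}$ lands in $W_S$ on the nose.

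What each approach buys: the paper's saturation trick is conceptually cleaner in that no small/big distinction is needed---every atom has full cardinality---at the price of actually using the product structure $Y\times Y$ and the $T_3$ hypothesis on $Y$. Your argument never uses $T_3$ for $Y$ at all (only Hausdorffness, so that compact images are closed and $g(\ol M)=\ol{\psi(M)}$, and so that $L$ is normal); it would go through verbatim for any infinite Hausdorff $Y$, which is a mild strengthening of the lemma. The cardinality bookkeeping you flag as ``the crux'' is exactly the new idea that makes this simplification possible.
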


\begin{proof}
From Lemma \ref{vietoris-final1} we know that $\sigma_V$ is contained in the final topology w.r.t. $\pi_Z\circ\mu_Z$, so we only have to show, that every open set of the final topology also belongs to $\sigma_V$. Let $\go$ be an open set of the final topology, i.e. $(\pi_Z\circ\mu_Z)^{-1}(\go)\in \tau_{co}$, and let $A\in\go$.\\

We want to show, that there exist finitely many open sets $U_1,..., U_m\in\sigma$ s.t. $A\in \vietoris{U_1,...,U_m}{K(Y)}\subseteq \go$.\\

At first, chose any surjection $s$ from $Y$ onto $A\subseteq Y$. Then extend it to a surjection $f_A:Y\times Y\to A$ by $f_A(y_1,y_2):=s(y_1)$, just meaning, that $f_A$ maps such pairs with equal first component to the same image.\\

Now, endowing $Y\times Y$ with discrete topology, we get $f_A$ being continuous. So, if $(Z,\zeta)$ denotes the Stone-\v{C}ech-compactification of the discrete $Y\times Y$, there exists a continuous extension $F_A:Z\to A$ of $f_A$.

Because $F_A$ is an extension of $f_A$, we have 
\begin{equation}\label{Fsaturiert}
\forall (a,b), (c,d)\in Y\times Y: a=c\implies F_A(a,b)=F_A(c,d)\text{ .}
\end{equation}

Because $\go$ is open in the final topology, there are finitely many compact subsets $K_1,...,K_n\in K(Z)$ and open subsets $O_1,...,O_n\in \sigma$ s.t. $F_A\in \bigcap_{i=1}^n(K_i,O_i)\subseteq (\pi_X\circ\mu_X)^{-1}(\go)$.\\

We will improve the sets $K_i$ and $O_i$ a little in an appropriate manner.\\

\begin{enumerate}\renewcommand{\labelenumi}{(\alph{enumi})}
\renewcommand{\theenumi}{(\alph{enumi})}

\item For each $K_i$ and every $k\in K_i$ there is an open neighbourhood $U_k$ of $k$, s.t. $F_A(U_k)\subseteq O_i$, because $F_A$ is continuous. Now, $\zeta$ has a base consisting of clopen sets $B$ of the form $B=\ol{B\cap(Y\times Y)}$. So, there exist always such a clopen $B_k\subseteq U_k$ with $k\in B_k$ and $F_A(B_k)\subseteq O_i$. The family of all $B_k, k\in K_i$ is an open cover of $K_i$ and consequently there is a finite subcover $\{B_{k_1}, ..., B_{k_l}\}$, by compactness of $K_i$. Now let 
\bdm
K_i':=\bigcup_{j=1}^l B_{k_j}
\edm
and observe, that $K_i'$ as a finite union of clopen sets is clopen again, hence it is compact and of the form $K_i'=\ol{K_i'\cap(Y\times Y)}$. Furthermore we have $K_i\subseteq K_i'$ and consequently
\bdm
F_A\in (K_i', O_i)\subseteq (K_i,O_i)\text{ .}
\edm

\item\label{liste2} We want to have our $K$'s saturated in the sense, that whenever $(a,b)\in K\cap (Y\times Y)$ holds, then $\{a\}\times Y\subseteq K$ also holds. So, let us define
\bdm
D_i:=\bigcup_{\substack{a\in Y, \exists b\in Y:\\ (a,b)\in K_i'\cap(Y\times Y)}}\{a\}\times Y
\edm
and then $K_i'':=\ol{D_i}$. From the continuity of $F_A$ follows 
\begin{equation}
F_A(K_i'')=F_A(\ol{D_i})\subseteq \ol{F_A(D_i)}
\end{equation}
and from \eqref{Fsaturiert} we get 
\begin{equation}
F_A(D_i)=F_A\left(K_i'\cap(Y\times Y)\right)\text{ .}
\end{equation}
Of course, $F_A\left(K_i'\cap(Y\times Y)\right)\subseteq F_A(K_i')$ and $F_A(K_i')$ is compact and fulfills $F_A(K_i')\subseteq O_i$, so by proposition \ref{halb-t4} we get from $(Y,\sigma)$ being $T_3$

\begin{equation}
F_A(K_i'')\subseteq \ol{F_A(D_i)}\subseteq \ol{F_A(K_i')}\subseteq O_i \text{ .}
\end{equation}
Note, that all $K_i''$ are compact and clopen again, by construction as a closure of a subset of $Y\times Y$ in the Stone-\v{C}ech-compactification $(Z,\zeta)$ of the discrete $Y\times Y$. Clearly, $K_i''\supseteq K_i'$ holds, yielding $(K_i'',O_i)\subseteq (K_i',O_i)$, thus
\begin{equation}
F_A\in \bigcap_{i=1}^n(K_i'',O_i) \subseteq \bigcap_{i=1}^n (K_i',O_i) \text{ .}
\end{equation}

\item\label{liste3} To cover $Z$ (resp. $A$) with our compact sets, we add $K_0'':=Z$ (resp. $O_0:=Y$) and find of course 
\bdm
F_A\in \bigcap_{i=1}^n(K_i'',O_i)\; =\; \bigcap_{i=0}^n(K_i'',O_i)\text{ .}
\edm

For each $z\in Z$ define 
\bdm
I(z):=\left\{ i\in\{0,...,n\}\left|\; z\in K_i''\right\}\right.
\edm
and then 
\begin{equation}\label{defcz}
C(z):=\bigcap_{i\in I(z)}K_i''\setminus\left( \bigcup_{j\in \{0,...,n\}\setminus I(z)}K_j''\right) \text{ }
\end{equation}
as well as 
\begin{equation}\label{defoz}
V(z):=\bigcap_{i\in I(z)}O_i \text{ .}
\end{equation}

Obviously for every $z\in Z$ we have 
\bdm
F_A(C(z))\subseteq F_A\left(\bigcap_{i\in I(z)}K_i''\right)\subseteq \bigcap_{i\in I(z)}O_i\; = V(z)
\edm
implying $F_A\in (C(z),V(z))$.

The family of all $C(z)$ covers $Z$, because every $z\in Z$ is contained at least in it's own $C(z)$. Observe, that different $C(z_1)$ and $C(z_2)$ are disjoint: if $y\in C(z_1)\cap C(z_2)$ exists, then $I(z_1)=I(y)=I(z_2)$ follows, implying $C(z_1)=C(z_2)$ by \eqref{defcz}. 

Obviously, there are only finitely many different sets $C(z), V(z)$, because they are uniquely determined by $I(z)$, which is a subset of $\{0,...,n\}$ and this set has just finitely many subsets. So, for simplicity, let us denote them by $C_1,...,C_m$ and $V_1,...,V_m$, respectively.

It is clear, that the $C_j$'s are clopen (thus compact) and saturated in the sense of paragraph \ref{liste2}, by construction \eqref{defcz} from just clopen saturated $K_i''$'s.\\ 

For $G\in \bigcap_{j=1}^m(C_j, V_j) =  \bigcap_{z\in Z}(C(z), V(z))$ we find 
\bdm
\begin{array}{rclcl}
\forall i\in\{0,...,n\}&:& \forall z\in K_i''&:& i\in I(z)\\
&&\implies &:& G(z)\in V(z)\subseteq O_i\\
\implies &:& G(K_i'')\subseteq O_i\text{ .} &&
\end{array}
\edm
Consequently, we have 
\begin{equation}
\label{Csbesser}
F_A\in\bigcap_{j=1}^m(C_j,V_j)\subseteq \bigcap_{i=0}^n(K_i'',O_i)
\end{equation}

\item\label{liste4} At last, let us chose for every $j=1,...,m$ an open set $U_j\in \sigma$ s.t. $F_A(C_j)\subseteq U_j\subseteq \ol{U_j}\subseteq V_j$ holds, as provided by proposition \ref{halb-t4}. Of course, we have then automatically $F_A\in (C_j, U_j)\subseteq (C_j,V_j)$.\\
So, because the $C_j$'s cover $Z$, the $F_A(C_j)$'s cover $A$, and so the $U_j$'s do.
\end{enumerate}

With these $U_j$, $j=1,...,m$ we show $A\in \vietoris{U_1,...,U_j}{K(Y)}\subseteq \go$.\\ 

$A\in \vietoris{U_1,...,U_m}{K(Y)}$ is clear, because the $U_j$'s cover $A$, as seen in paragraph \ref{liste4}, and $\emptyset\neq F_A(C_j)\subseteq A\cap U_j$ for all $j=1,...,m$.\\

Let 
\begin{equation}\label{BinV}
B\in \vietoris{U_1,...,U_j}{K(Y)}
\end{equation}
be given. 
%Let an arbitrary $b_0\in B$ be fixed.

Because every $C_j$ is nonempty clopen and saturated in the sense of paragraph \ref{liste2}, $C_j\cap (Y\times Y)$ has the cardinality of $Y$. So, there exists a surjection $t_j:C_j\cap (Y\times Y)\to U_j\cap B$ (the range is not empty by \eqref{BinV}).

Now, define 
\bdm
t:(Y\times Y)\to B:\; t(x,y):=t_j(x,y)\text{ for }(x,y)\in C_j 
\edm
This $t$ is well defined, because the $C_j$'s are pairwise disjoint and cover $Z$ by paragraph \ref{liste3}, and it is a surjection onto $B$, because the $U_j$'s cover $B$ by \eqref{BinV} and the $t_j$ are surjections onto $U_j\cap B$. Our $t$ is continuous w.r.t. the discrete topology on $Y\times Y$, so it extends to a continuous $T:Z\to B$. 

By construction we have for each $j\in\{1,...,m\}$ 
\begin{equation}
T\left(C_j\cap (Y\times Y)\right)\subseteq U_j\text{ ,}
\end{equation}
implying $T(C_j)=T\left(\ol{C_j\cap (Y\times Y)}\right)\subseteq \ol{T\left(C_j\cap (Y\times Y)\right)} \subseteq \ol{U_j}$ by continuity, thus $T(C_j)\subseteq V_j$ by choice of $U_j$ in paragraph \ref{liste4}.\\

We find $T\in \bigcap_{j=1}^m(C_j,V_j)\subseteq (\pi_Z\circ\mu_Z)^{-1}(\go)$, yielding $B=\pi_Z\circ\mu_Z(T)\in \go$. This works for every $B\in \vietoris{U_1,...,U_m}{K(Y)}$, thus we have indeed $\vietoris{U_1,...,U_m}{K(Y)}\subseteq \go$. Consequently, $\go$ is a union of Vietoris-open subsets of $K(Y)$, just meaning $\go\in\sigma_V$.
\end{proof}

\begin{remark}
\begin{enumerate}
\item Of course, $Y\times Y$ with discrete topology is homeomorphic to $Y$ with discrete topology for infinite $Y$. So, we used $Y\times Y$ here just for convenience concerning the description of the \glqq saturated\grqq\ subsets within the proof. Nevertheless, even for finite $Y$ this proof works fine, but wouldn't do so with $Y$ instead of $Y\times Y$.
\item In Lemma \ref{vietoris-contains-quotient}, we required $T_2$ for $Y$, because we use the classical version of the Stone-\v{C}ech theorem here. This can be weakened, because we need only the extendability of continuous functions to the compactification - and not the uniqueness of the extension.
\end{enumerate}
\end{remark}

\begin{cor}
Let $(Y,\sigma)$ be a Hausdorff $T_3$-space. For every topological space let $C(X,Y)$ be equipped with compact-open topology.
Let $\cb$ be a class of topological spaces, that contains the Stone-\v{C}ech-compactification of a discrete space with cardinality at least $card(Y)$.\\
Then the Vietoris topology $\sigma_V$ on $K(Y)$ is the final topology w.r.t. all $\pi_A\circ\mu_{_{(X,\tau)}}$, $(X,\tau)\in\cb$, $A\in K(X,\tau)$.
\end{cor}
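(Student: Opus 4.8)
The plan is to pin down the final topology $\tau_f$ on $K(Y)$ with respect to the whole family of maps $\pi_A\circ\mu_{(X,\tau)}$, $(X,\tau)\in\cb$, $A\in K(X,\tau)$, by squeezing it between $\sigma_V$ and $\sigma_V$. One of the two inclusions is free of charge: the Corollary stated right after Lemma~\ref{vietoris-final1} asserts, for \emph{every} class of topological spaces, hence in particular for the present $\cb$, that $\sigma_V\subseteq\tau_f$. Thus only $\tau_f\subseteq\sigma_V$ remains to be shown.

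For that I would use the hypothesis that $\cb$ contains a space $(Z,\zeta)$ which is the Stone-\v{C}ech-compactification of a discrete space of cardinality $\geq card(Y)$. Since $(Z,\zeta)$ is compact, $Z\in K(Z,\zeta)$, so the single map $\pi_Z\circ\mu_{(Z,\zeta)}:C(Z,Y)\to K(Y)$ (with $C(Z,Y)$ carrying the compact-open topology) --- which is exactly the map $\pi_Z\circ\mu_Z$ of Lemma~\ref{vietoris-contains-quotient} --- occurs among the maps that define $\tau_f$. The final-topology construction is monotone under enlarging the defining family of maps: a subset of $K(Y)$ that pulls back to an open set under \emph{every} member of the big family in particular pulls back to an open set under $\pi_Z\circ\mu_{(Z,\zeta)}$; hence $\tau_f$ is contained in the final topology determined by $\pi_Z\circ\mu_{(Z,\zeta)}$ alone. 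By Lemma~\ref{vietoris-contains-quotient} that final topology is precisely $\sigma_V$. Combining the two inclusions yields $\tau_f=\sigma_V$, as claimed.

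The one point deserving a comment is that Lemma~\ref{vietoris-contains-quotient} is literally stated for $Z$ the Stone-\v{C}ech-compactification of the discrete space $Y\times Y$, whereas here $Z$ may be built from any discrete space $D$ with $card(D)\geq card(Y)$. For finite $Y$ there is nothing to check, since then $K(Y)=\Pm{Y}$ is discrete and both $\sigma_V$ and $\tau_f$ equal the discrete topology. For infinite $Y$ one has $card(Y\times Y)=card(Y)$, and inspection of the proof of Lemma~\ref{vietoris-contains-quotient} shows that the product structure of $Y\times Y$ is used only through the decomposition of the dense discrete subset into \emph{columns} $\{a\}\times Y$ of size $card(Y)$, respected by the saturation procedure, so that each saturated clopen piece $C_j$ meets the dense discrete set in a set of cardinality $card(Y)$ --- which is all that is needed to build the surjections $t_j$ onto the sets $U_j\cap B\subseteq Y$. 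For an arbitrary discrete $D$ with $card(D)\geq card(Y)$, infinite cardinal arithmetic gives $card(D)=card(D)\cdot card(Y)$, so $D$ splits into $card(D)$ columns of size $card(Y)$, and with these columns in the role of the sets $\{a\}\times Y$ the proof of Lemma~\ref{vietoris-contains-quotient} goes through unchanged. Hence the Lemma is available for every admissible $Z$.

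Consequently the genuine content has already been spent in the Corollary following Lemma~\ref{vietoris-final1} and in Lemma~\ref{vietoris-contains-quotient}; the present Corollary just packages them via the monotonicity of final topologies, and the only real --- and minor --- obstacle is the cardinality bookkeeping that makes Lemma~\ref{vietoris-contains-quotient} applicable to a discrete space of arbitrary size at least $card(Y)$.
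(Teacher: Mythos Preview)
Your argument is correct and is exactly the intended one: the paper states this Corollary without proof because it is meant to follow by sandwiching the final topology between the earlier Corollary (after Lemma~\ref{vietoris-final1}) and Lemma~\ref{vietoris-contains-quotient}, via the monotonicity of final topologies that you spell out. Your extra paragraph on cardinality bookkeeping is a welcome addition---the paper only gestures at this in the Remark following Lemma~\ref{vietoris-contains-quotient} (noting that $Y\times Y$ is used merely for convenience), whereas you make explicit why any discrete $D$ with $card(D)\geq card(Y)$ admits the same column decomposition and hence the same proof.
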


\bibliographystyle{abbrvdin}
%alternativ: plain, unsrt, alpha, abbrv, plaindin, abbrvdin, plainams, abbrvams ...
\bibliography{biblio}

\noindent
Ren\'e Bartsch\\ TU Darmstadt, Dept. of Math.\\ Schlossgartenstr. 7\\ D-64289 Darmstadt (Germany)\\
email: \href{mailto:rbartsch@mathematik.tu-darmstadt.de}{rbartsch\symbol{64}mathematik.tu-darmstadt.de}\\ %(or permanently:    \href{mailto:math@marvinius.net}{math\symbol{64}marvinius.net} )\\

\end{document}